\newtheorem{theorem}{Theorem}
\newtheorem{proposition}{Proposition}
\theoremstyle{remark}
\newtheorem{remark}{Remark}
\newcounter{weakforms}
\newenvironment{weakform}[1]
{
\refstepcounter{weakforms}
\begin{mdframed}[style=box]
\textbf{(W\theweakforms)}
%(WF\theweakforms)
\itshape
}
{
    \end{mdframed}
    }
\pgfplotsset{
	compat=newest,
	tick label style={font=\small},
	label style={font=\small},
	legend style={nodes={scale=0.7, transform shape}}
	}
\newcommand{\weakref}[1]
{\hyperref[#1]{\textrm{W\ref*{#1}}}}
\journal{arXiv}
\begin{document}

\begin{frontmatter}

%% Title, authors and addresses

%% use the tnoteref command within \title for footnotes;
%% use the tnotetext command for the associated footnote;
%% use the fnref command within \author or \address for footnotes;
%% use the fntext command for the associated footnote;
%% use the corref command within \author for corresponding author footnotes;
%% use the cortext command for the associated footnote;
%% use the ead command for the email address,
%% and the form \ead[url] for the home page
%%
%% \title{Title\tnoteref{label1}}
%% \tnotetext[label1]{}
%% \author{Name\corref{cor1}\fnref{label2}}
%% \ead{email address}
%% \ead[url]{home page}
%% \fntext[label2]{}
%% \cortext[cor1]{}
%% \address{Address\fnref{label3}}
%% \fntext[label3]{}

%\title{Efficient reduced coupling of PDEs based on weak transmission conditions}
\title{Coupling non-conforming discretizations of PDEs by spectral approximation of the Lagrange multiplier space}
\author[epfl]{Simone Deparis}
\author[epfl]{Luca Pegolotti\corref{cor1}}
\ead{luca.pegolotti@epfl.ch}

\cortext[cor1]{Corresponding author}

\address[epfl]{Institute of Mathematics, \'{E}cole Polytechnique
F\'{e}d\'{e}rale de Lausanne, Station 8, EPFL, CH–1015 Lausanne, Switzerland}
%\address[polimi]{MOX–-Modeling and Scientific Computing, Mathematics Department
% ``F. Brioschi'', Politecnico di Milano, via Bonardi 9, Milano, 20133, Italy}

\begin{abstract}
This work focuses on the development of a non-conforming domain decomposition method for the approximation of PDEs based on weakly imposed transmission conditions: the continuity of the global solution is enforced by a discrete number of Lagrange multipliers defined over the interfaces of adjacent subdomains. The method falls into the class of primal hybrid methods, which also include the well-known mortar method. Differently from the mortar method, we discretize the space of basis functions on the interface by spectral approximation independently of the discretization of the two adjacent domains; one of the possible choices is to approximate the interface variational space by Fourier basis functions.
As we show in the numerical simulations, our approach is well-suited for the solution of problems with non-conforming meshes or with finite element basis functions with different polynomial degrees in each subdomain. Another application of the method that still needs to be investigated is the coupling of solutions obtained from otherwise incompatible methods, such as the finite element method, the spectral element method or isogeometric analysis.
\end{abstract}

\begin{keyword}
%% keywords here, in the form: keyword \sep keyword
Partial Differential Equations \sep Non-conforming method \sep Domain decomposition
%% MSC codes here, in the form: \MSC code \sep code
%% or \MSC[2008] code \sep code (2000 is the default)

\end{keyword}

\end{frontmatter}

\section{Introduction}
In numerical analysis, domain decomposition methods are techniques for the splitting of Partial Differential Equations (PDEs) into smaller and coupled problems defined over subsets of the original domain. The splitting may be motivated by physical reasons, for instance when the subdomains are characterized by different governing equations (e.g. in fluid-structure-interaction problems \cite{deparis2006domain}) or by discretization needs, should it be required to employ specific methods (e.g. finite element method or spectral element method) or specific polynomial degrees in certain regions of the domain \cite{toselli2005domain}. Moreover, domain decomposition methods have become particularly important for the solution of large scale problems on multiprocessors or clusters, as they allow the mapping of the subproblems on separate cores \cite{israeli1993domain}.

Domain decomposition methods are typically based either on iterative or direct procedures \cite{becker2003finite}. In the first class of techniques the continuity on the interfaces of the solution, of its normal derivatives or combinations of the two are strongly imposed. Typically, these methods require solving the problems defined on the subdomains separately multiple times while imposing artificial boundary conditions based on the solutions at the previous iteration. The type of boundary conditions employed on each subdomain is a peculiarity of each algorithm, so that the literature on the topic commonly refers to the Dirichlet-Dirichlet algorithm, the Dirichlet-Neumann algorithm, and so on; see e.g \cite{toselli2005domain} for details. These strategies allow reducing the size of the linear systems to be solved and, most importantly, to compute the solution on each subdomain in parallel.

In this paper, we present an approach belonging to the class of direct procedures in which  the continuity conditions (often called \textit{transmission conditions}) are weakly imposed through the use of suitable Lagrange multipliers. Our method is applied to PDEs written in primal hybrid formulation, and for this reason it shares some of the features of the well-known mortar method \cite{bernardi1989new,bernardi2005basics}. This was originally proposed to solve PDEs by combining spectral elements and finite elements, or by combining finite element spaces with different polynomial degrees, in non overlapping portions of the domain \cite{quarteroni1999domain}. Since then, the mortar method has become the non-conforming method of choice in many areas of computational science and engineering, for example in contact mechanics \cite{puso2004mortar}, solid mechanics \cite{puso20043d}, fluid mechanics \cite{ehrl2014dual} and fluid-structure interaction problems \cite{kloppel2011fluid}; see also \cite{popp2014dual,hesch2014mortar,belgacem2003hp}. The implementation of the mortar method is not straightforward, as the algorithm is based on $L^2$-projections of the traces of functional spaces defined on a group subdomains -- the masters -- onto the interfaces of the adjacent ones -- the slaves. INTERNODES \cite{deparis2016internodes, forti2016parallel}, a recently developed method for the treatment of non-conforming meshes, overcomes this issue by treating the transmission conditions with the interpolation of basis functions of the master domains onto the interfaces of the slaves.

As in the mortar method, our approach is based on the idea that the global problem can be subdivided into a set of smaller problems coupled with weak conditions relying on basis functions defined on the interfaces. In the mortar method, such basis functions are obtained from the trace space of the adjacent slave domains. This choice is convenient from the analysis standpoint but makes the implementation of the method cumbersome. Another drawback is that the final solution is dependent on the choice of master and slave domains. The originality of our method is to consider basis functions on the interfaces which are completely independent of the discretization of the neighboring domains: in this paper, we employ spectral basis functions (specifically, Fourier basis functions).  This comes with the advantage of obtaining a solution which is indifferent to the choice of master and slave domains. Moreover, the accuracy of the coupling of solutions at the interfaces is easily tuned by varying the number of basis functions on the common boundary. Our approach can be interpreted as a specialization of the three-fields method \cite{brezzi1994three}, where the space of the three Lagrange multipliers used to weakly impose the continuity of the solution is (a priori) independent of the spaces defined on the adjacent domains. As the functional spaces in the subdomains are mutually independent, our choice of basis functions is well-suited for the coupling of solutions obtained on non-conforming (at the interfaces) meshes, with finite element spaces with different polynomial degrees, or with different numerical methods, e.g. finite element method, spectral element method, or isogeometric analysis \cite{cottrell2009isogeometric,hughes2005isogeometric}.

The paper is structured as follows. In Section~\ref{sec:method}, we present the method on an elliptic problem defined over a domain partitioned into two regions. Section~\ref{sec:discretization} focuses on the discretization of the weak formulation derived in Section~\ref{sec:method}. In Section~\ref{sec:relationship}, we briefly compare our method with other non-conforming methods, namely the mortar method, INTERNODES, and the three-field method, and focus on the similarities and peculiarities with respect to our approach. In Section~\ref{sec:infsup} we address the matter of the stability of the method, which is strictly related to the inf-sup condition. In Section~\ref{sec:applications}, the method is used to solve two-dimensional benchmark problems with finite element discretizations in the subdomains: the Poisson problem on two subdomains (Section~\ref{subsec:poisson}) and the Navier-Stokes equations on five subdomains (Section~\ref{sec:nstokes}). Finally, in Section~\ref{sec:conclusions} some conclusions are drawn.

\subsection{Notation}
\label{subsec:notation}
The notation adopted in this paper is standard and commonly found in the literature; see e.g. \cite{quarteroni2008numerical}.
Given a generic open and bounded domain $\Omega$ embedded in $\mathbb R^d$, we define, for all $\varphi, \psi: \Omega \rightarrow \mathbb{R}$ and all $\boldsymbol \varphi, \boldsymbol \psi: \Omega \rightarrow \mathbb{R}^d$
\begin{equation}
(\varphi,\psi)_{\Omega} := \int_{\Omega} \varphi \psi \, \text d \textbf x, \quad (\boldsymbol \phi, \boldsymbol \psi)_{\Omega} := \int_{\Omega} \boldsymbol \varphi \cdot \boldsymbol \psi\, \text{d} \mathbf x,
\end{equation}
and consider the following Hilbert spaces
\begin{align}
L^2(\Omega) &:= \{ \varphi: \Omega \mapsto \mathbb{R}: (\varphi, \varphi)_\Omega \ < \infty \},\\
H^1(\Omega) &:= \{ \varphi \in L^2(\Omega): \nabla \varphi \in [L^2(\Omega)]^d\}, \\
H(\text{div};\Omega) &:= \{ \boldsymbol \phi \in [L^2(\Omega)]^d: \text{div} \boldsymbol \phi \in L^2(\Omega)\},
\end{align}
with the associated norms
\begin{align}
\Vert \varphi \Vert_{L^2(\Omega)}^2&:=(\varphi,\varphi)_\Omega, \\
\Vert \varphi \Vert_{H^1(\Omega)}^2&:=(\varphi,\varphi)_\Omega + (\nabla \varphi, \nabla \varphi)_\Omega, \\
 \Vert \boldsymbol \phi \Vert_{H(\text{div};\Omega)}^2&:=(\boldsymbol \phi,\boldsymbol \phi )_{\Omega} + (\text{div}\boldsymbol \phi,\text{div}\boldsymbol \phi)_{\Omega}.
\end{align}
Given a measurable set $\Sigma \subseteq \partial \Omega$ (where $\partial \Omega$ denotes the boundary of $\Omega$), we also define
\begin{equation}
 H^1_{\Sigma}(\Omega) := \{\varphi \in H^1(\Omega):\, \varphi = 0 \text{ on } \Sigma \}.
\end{equation}

Under the assumption of sufficient regularity of $\Sigma$, there exists a unique linear and continuous application $\gamma_{\Sigma}: H^1(\Omega) \mapsto L^2(\Sigma)$ called trace operator \cite{quarteroni2014numerical, salsa2016partial} such that $\gamma_{\Sigma} \varphi = \varphi|_{\Sigma}$ for all $\varphi \in H^1(\Omega) \cap C^0(\bar{\Omega})$, having indicated with $C^0(\bar{\Omega})$ the space of continuous functions over the closure of $\Omega$. The range of such operator is denoted $H^{1/2}(\Sigma) \subset L^2(\Sigma)$. We recall that
\begin{equation}
\Vert \eta \Vert_{H^{1/2}(\Sigma)} : = \inf_{\substack{\varphi \in H^1(\Omega) \\ \varphi|_{\Sigma} = \eta}} \Vert \varphi \Vert_{H^1(\Omega)}
\end{equation}
is a norm for $H^{1/2}(\Sigma)$ \cite{braess1999multigrid}.

For each linear Hilbert space $\mathcal H$, we denote $\mathcal H'$ the space of linear and bounded functionals on $\mathcal H$, namely its dual space. In particular, we adopt the notation $H^{-1}(\Omega) := (H^{1}(\Omega))'$ and $H^{-1/2}(\Sigma) := (H^{1/2}(\Sigma))'$. The action of an element of the dual space $\xi \in \mathcal H'$ on an element of the Hilbert space $\varphi \in \mathcal H$ is indicated $\langle \xi, \varphi \rangle_{\mathcal H'}$ or simply $\langle \xi, \varphi \rangle$ whenever ambiguity does not arise. Furthermore, we will simply indicate $\langle \xi, \varphi \rangle_{\Sigma}$ the duality in $H^{-1/2}(\Sigma)$. We note that, for $\varphi \in H^1(\Omega)$, we will adopt the abuse of notation $\langle \xi, \varphi \rangle_{\Sigma}$ with $\xi \in H^{-1/2}(\Sigma)$ to indicate the duality of $\xi$ with the trace of $\varphi$ on $\Sigma$. Even though not mathematically rigorous, this notation is commonly used in the literature.
\section{Theory of primal hybrid methods }
\label{sec:method}
In this section, we recall the theory of primal hybrid methods for the solution of Partial Differential Equations (PDEs). These approaches are based on the \textit{primal hybrid principle} \cite{raviart1977primal}, according to which the continuity across subdomains is weakened by means of Lagrange multipliers. We refer the reader to \cite{boffi2013mixed,acharya2016primal, belgacem1999mortar} for the theory of primal hybrid methods. Here, we recall the main ideas by following the presentation in \cite{braess1999multigrid}. We also restrict ourselves to only two partitions of the domain; however -- as we shall see in Section~\ref{subsec:generalization} -- the method extends to an arbitrary number of partitions.

We are interested in solving a generic PDE described by a second order elliptic operator on an open and bounded domain $\Omega$ with homogeneous Dirichlet boundary conditions on $\partial \Omega$. Specifically, we assume that $a(\varphi,\psi)$  for $\varphi, \psi \in H^1(\Omega)$ is the bilinear form corresponding to the elliptic operator and $f$ is a given forcing term; we consider problems whose weak formulation can be written as:
\begin{weakform}{}
given $f \in H^{-1}(\Omega)$, find $u \in  H^1_{\partial \Omega}(\Omega)$, such that
\begin{equation}
a(u,v) = \langle f,v \rangle \quad \forall v \in  H^1_{\partial \Omega}(\Omega).
\label{eq:fullpoissonweak}
\end{equation}
\label{weak:fullpoisson}
\end{weakform}
In the sequel, we will often use the Poisson problem with homogeneous boundary conditions
\begin{equation}
\begin{alignedat}{3}
-\Delta u &= f\quad && \text{in } \Omega, \\
u & = 0 && \text{on } \partial \Omega,
\label{eq:poisson}
\end{alignedat}
\end{equation}
as representative of this class of problems. In this specific case, $a(\varphi,\psi) = (\nabla \varphi, \nabla \psi)_\Omega$.

Let us assume that the domain $\Omega$ can be partitioned into two non-overlapping open and bounded domains, such that $\Omega = \Omega_1 \cup \Omega_2$ and $\Omega_1 \cap \Omega_2 = \emptyset$; we denote $\Gamma$ the interface between the two domains, i.e. $\Gamma = \overline{\Omega}_1 \cap  \overline{\Omega}_2$. Our goal is to solve, rather than the global problem \weakref{weak:fullpoisson}, two local and coupled problems defined on the partitions $\Omega_i$, such that the global solution can be constructed by combining the solutions of the local problems. To this end, let us introduce the functional spaces $\mathcal X^{(i)} = H^{1}_{\partial \Omega \cap \partial \Omega_i}(\Omega_i)$ and
\begin{equation}
\mathcal X := \{\varphi \in L^2(\Omega): \varphi|_{\Omega_i} \in \mathcal X^{(i)} \text{ for } i = 1,2 \},
\end{equation}
which is a Hilbert space when endowed with the (broken) norm
\begin{equation}
\Vert \varphi \Vert_{\mathcal X}^2 := \sum_{i = 1}^2 \Vert \varphi|_{\Omega_i} \Vert_{ H^1(\Omega_i)}^2.
\label{eq:broken_norm}
\end{equation}
The space $H^1_{\partial \Omega}(\Omega)$ is characterized as a subspace of $\mathcal X$ under suitable conditions \cite{braess1999multigrid} which we will state in the following Lemma and motivate in its proof. We remark that an analogous result is presented in \cite{boffi2013mixed}.
\begin{proposition}{\cite[cf. Proposition 2.1.1]{boffi2013mixed}}
\begin{equation}
H^{1}_{\partial \Omega}(\Omega) \equiv \widetilde{\mathcal V} := \{\varphi \in \mathcal X: \sum_{i = 1}^2 \langle \boldsymbol \phi \cdot \textbf{n}_i, \varphi \rangle_{\partial \Omega_i} = 0 \quad \forall \boldsymbol \phi \in H(\textup{div}; \Omega) \},
\end{equation}
where $\textbf n_i$ is the outward unit vector normal to $\partial \Omega_i$.
\label{proposition:equiv}
\end{proposition}
\begin{proof}
Firstly, let us recall that for any $\boldsymbol \phi \in H^1(\text {div};\Omega)$ and for $i = 1,2$, Green's formula gives \cite[Lemma 2.1.1]{boffi2013mixed}
\begin{equation}
\int_{\Omega_i} \left( \nabla \varphi \cdot \boldsymbol \phi + \varphi \text{div}\boldsymbol \phi \right ) \,\text{d}\textbf x = \langle \boldsymbol \phi \cdot \textbf n_i, \varphi \rangle_{\partial \Omega_i} \quad \forall \varphi \in \mathcal X. \label{eq:greens}
\end{equation}
Eq.~\eqref{eq:greens} implies that, since the left hand side is bounded, the duality of $\boldsymbol \phi \cdot \textbf n_i \in H^{-1/2}(\partial \Omega_i)$ on the trace of $\varphi|_{\Omega_i}$ is well-defined.

Clearly $\widetilde{\mathcal V} \subset H^1_{\partial \Omega}(\Omega)$, because for all $\varphi \in \widetilde{\mathcal V} \subset L^2(\Omega)$
\begin{equation}
\int_{\Omega} |\nabla \varphi |^2\, \text d \textbf x = \sum_{i = 1}^2 \int_{\Omega_i} | \nabla \varphi |^2\, \text d \textbf x  < \infty,
\end{equation}
and $\varphi = 0$ on $\partial \Omega$ in $H^{1/2}(\partial \Omega)$. Let us show the other inclusion. For every $\varphi \in H^1_{\partial \Omega}(\Omega)$, it holds that $\varphi \in L^2(\Omega)$ and $\varphi|_{\Omega_i} \in \mathcal X^{(i)}$  for $i = 1,2$, which implies that $H^1_{\partial \Omega}(\Omega) \subset \mathcal X$. Moreover, by applying Green's formula as in Eq.~\eqref{eq:greens}, we find for all $\boldsymbol \phi \in H(\text{div};\Omega)$
\begin{equation}
\sum_{i = 1}^2 \langle \boldsymbol \phi \cdot \textbf{n}_i, \varphi \rangle_{\partial \Omega_i} =  \sum_{i = 1}^2 \int_{\Omega_i} \left( \nabla \varphi \cdot \boldsymbol \phi + \varphi \text{div}\boldsymbol \phi \right ) \,\text{d}\textbf x = \int_{\Omega} \left( \nabla \varphi \cdot \boldsymbol \phi + \varphi \text{div}\boldsymbol \phi \right ) \,\text{d}\textbf x = \langle \boldsymbol \phi \cdot \textbf n , \varphi \rangle_{\partial \Omega} = 0,
\label{eq:equivalenceproof}
\end{equation}
where  the last equality comes from the fact that $\varphi$ has null trace on the boundary $\partial \Omega$. Since Eq.~\eqref{eq:equivalenceproof} shows that $H^1_{\partial \Omega}(\Omega) \subset \widetilde{\mathcal V}$, it must be $H^1_{\partial \Omega}(\Omega) \equiv \widetilde{\mathcal V}$.
\end{proof}
\begin{remark}
If the PDE is equipped with Dirichlet conditions on $\partial \Omega_D$ and Neumann conditions on $\partial \Omega_N$, the space $\mathcal X$ must be defined such that $\varphi|_{\Omega_i}$ belongs to $\mathcal X^{(i)}$ for $i = 1,2$. In this case it is not sufficient to ask that $\boldsymbol \phi \in H(\text{div};\Omega)$ in the definition of $\mathcal V$ to have the equivalence between $H^1_{\partial \Omega} (\Omega)$ and $\mathcal V$. In particular, the space $H(\text{div};\Omega)$ must be restricted to functions $\boldsymbol \phi$ such that $\langle \boldsymbol \phi \cdot \textbf n, \varphi \rangle_{\partial \Omega} = 0$ for each $\varphi \in \mathcal V$.
\end{remark}
The condition $\sum_{i = 1}^2 \langle \boldsymbol \phi \cdot \textbf n_i, \varphi \rangle_{\partial \Omega_i} = 0$ for each $\boldsymbol \phi \in H(\text{div};\Omega)$ is global, in the sense that it involves the trace of $\varphi|_{\Omega_i}$ on the whole $\partial \Omega_i$, even though it essentially constrains the restrictions of $\varphi$ to $\Omega_1$ and $\Omega_2$ to have the same trace at the common interface $\Gamma$. Unfortunately, splitting the dualities into two parts corresponding to $\Gamma$ and $\partial \Omega \setminus \Gamma$ is not allowed, as the restrictions of the traces to portions of $\partial \Omega_i$ can lead to unbounded dualities. To overcome this issue, we introduce
\begin{equation}
H^{1/2}_{00}(\Gamma_i) := \{ \eta \in H^{1/2}(\Gamma_i): E_0^{(i)} \eta \in H^{1/2}(\partial \Omega_i) \},
\end{equation}
with norm
\begin{equation}
\Vert \eta \Vert_{H^{1/2}_{00}(\Gamma_i)}:= \Vert E_0^{(i)} \eta \Vert_{H^{1/2}(\partial \Omega_i)},
\end{equation}
where $E_0^{(i)} \eta$ is the trivial extension by zero of $\eta$ to the whole boundary of $\partial \Omega_i$ and $\Gamma_i = \partial \Omega_i \setminus \partial \Omega$.  In the following, we will consider $H^{1/2}_{00}(\Gamma) : = H^{1/2}_{00}(\Gamma_1) \cap H^{1/2}_{00}(\Gamma_2)$. Let us define the spaces
\begin{equation}
\mathcal X_{00} := \{ \varphi \in \mathcal X: [\varphi]_{\Gamma} \in H^{1/2}_{00}(\Gamma)\},
\end{equation}
where $[\varphi]_{\Gamma}$ denotes by our convention the difference of the traces of $\varphi|_{\Omega_2}$ and $\varphi|_{\Omega_1}$ on $\Gamma$, and
\begin{equation}
\Lambda :=  H^{-1/2}_{00}(\Gamma),
\label{eq:lambdadef}
\end{equation}
with norm
\begin{equation}
\Vert \eta \Vert_{\Lambda} := \Vert \eta \Vert_{H^{-1/2}_{00}(\Gamma_1)} + \Vert \eta \Vert_{H^{-1/2}_{00}(\Gamma_2)}.
\end{equation}
Furthermore, we introduce the bilinear form
\begin{equation}
b(\varphi, \xi) := \langle \xi, [\varphi]_\Gamma \rangle_{\Lambda}
\end{equation}
for $\varphi \in \mathcal X_{00}$ and $\xi \in \Lambda$. It can be easily verified \cite{braess1999multigrid} that another characterization of $H^1_{\partial \Omega}(\Omega)$ analogous to that in Proposition~\ref{proposition:equiv} is given by
\begin{equation}
H^1_{\partial \Omega} (\Omega) \equiv \mathcal V := \{\varphi \in \mathcal X_{00}: b(\varphi, \xi) = 0\quad \forall \xi \in \Lambda \}.
\label{eq:char}
\end{equation}
In the sequel, we will use the letter $\mathcal V$ to refer to $H^1_{\partial \Omega}(\Omega)$.

We are now ready to state the primal hybrid formulation of the original weak formulation \weakref{weak:fullpoisson}. We remark that, whenever applied to functions of $\mathcal X$, the bilinear form $a(\cdot,\cdot)$ is to be intended as the sum of the bilinear forms restricted to the two subdomains.
\begin{weakform}{}
given $f \in H^{-1}(\Omega)$, find $u \in  \mathcal X_{00}$ and $\lambda \in \Lambda$ such that
\begin{equation}
\begin{alignedat}{3}
&a(u,v) + b(v,\lambda) = \langle f,v \rangle&&\quad \forall  v \in \mathcal X_{00}, \\
&b(u,\eta) = 0 && \quad \forall \eta \in \Lambda.
\end{alignedat}
\label{eq:saddleweak}
\end{equation}
\label{weak:saddleweak}
\end{weakform}
\begin{proposition}
If $u \in \mathcal V$ is a solution of \weakref{weak:fullpoisson} and there exists $\lambda \in \Lambda$ such that
\begin{equation}
b(v,\lambda) = \langle f, v \rangle - a(u,v)\quad \forall v \in \mathcal X_{00},
\label{eq:lambdacondition}
\end{equation}
then $(u,\lambda) \in \mathcal X_{00} \times \Lambda$ is a solution of \weakref{weak:saddleweak}. On the other hand, if $(u,\lambda) \in \mathcal X_{00} \times \Lambda$ is a solution of \weakref{weak:saddleweak}, then $u \in \mathcal V$ and $u$ is a solution of \weakref{weak:fullpoisson}.
\label{theorem:eq_bil_forms}
\end{proposition}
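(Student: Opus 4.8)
The plan is to reduce the whole equivalence to the kernel characterization \eqref{eq:char}, which identifies $\mathcal V = H^1_{\partial \Omega}(\Omega)$ with $\{\varphi \in \mathcal X_{00}: b(\varphi,\xi) = 0\ \forall \xi \in \Lambda\}$. Since the analytical substance has already been settled upstream — in particular the well-posedness of the duality pairing inside $b$, which is the very reason for introducing $\mathcal X_{00}$ and $\Lambda = H^{-1/2}_{00}(\Gamma)$ — the statement itself is essentially a formal manipulation of the two bilinear equations against that kernel.

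For the first implication, I would start from $u \in \mathcal V$ solving \weakref{weak:fullpoisson} together with the assumed multiplier $\lambda$ satisfying \eqref{eq:lambdacondition}. The second equation of \eqref{eq:saddleweak}, namely $b(u,\eta) = 0$ for all $\eta \in \Lambda$, is then immediate: it is precisely the defining property of $\mathcal V$ in \eqref{eq:char}, which $u$ enjoys by hypothesis. The first equation of \eqref{eq:saddleweak} is obtained by simply rearranging \eqref{eq:lambdacondition} into $a(u,v) + b(v,\lambda) = \langle f,v\rangle$ for all $v \in \mathcal X_{00}$. Hence $(u,\lambda)$ solves \weakref{weak:saddleweak} with no further work.

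For the converse, suppose $(u,\lambda) \in \mathcal X_{00} \times \Lambda$ solves \weakref{weak:saddleweak}. The second equation $b(u,\eta) = 0$ for all $\eta \in \Lambda$, combined with $u \in \mathcal X_{00}$, yields exactly $u \in \mathcal V = H^1_{\partial \Omega}(\Omega)$ via \eqref{eq:char}; this is the first assertion. To recover \weakref{weak:fullpoisson}, I would then test the first equation of \eqref{eq:saddleweak} only against $v \in \mathcal V \subset \mathcal X_{00}$. For such $v$ the kernel property \eqref{eq:char} forces $b(v,\xi) = 0$ for every $\xi \in \Lambda$, and in particular $b(v,\lambda) = 0$, so the multiplier term drops out and we are left with $a(u,v) = \langle f,v\rangle$ for all $v \in \mathcal V$, which is \weakref{weak:fullpoisson}.

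I do not expect a genuine obstacle here, precisely because the delicate analysis was dispatched earlier. The only point to keep in view is that every pairing $b(v,\lambda)$ and $b(u,\eta)$ appearing above is finite, which is guaranteed by restricting functions to $\mathcal X_{00}$ — so that the jump $[v]_\Gamma$ lives in $H^{1/2}_{00}(\Gamma)$ — and multipliers to its dual $\Lambda$. If anything, the subtlety worth flagging is purely bookkeeping: the first slot of $b$ carries the (trace of the) $H^1$ function and the second the multiplier, so that ``testing against $v \in \mathcal V$'' legitimately annihilates $b(v,\lambda)$ through the same characterization used to place $u$ in $\mathcal V$.
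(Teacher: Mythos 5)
Your proposal is correct and follows essentially the same route as the paper's proof: both directions rest on the kernel characterization \eqref{eq:char}, with the forward implication obtained by rearranging \eqref{eq:lambdacondition} and the converse by testing the first equation of \eqref{eq:saddleweak} against $v \in \mathcal V$ so that $b(v,\lambda)$ vanishes. No gaps; your added remark on the well-posedness of the pairings in $\mathcal X_{00} \times \Lambda$ is consistent with (and implicit in) the paper's setup.
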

\begin{proof}
Let $u \in \mathcal V$ be a solution of \weakref{weak:fullpoisson}, then $u \in \mathcal X_{00}$ and the second condition in Eq.~\eqref{eq:saddleweak} is satisfied because of the definition \eqref{eq:char}. The first condition in Eq.~\eqref{eq:saddleweak} is satisfied when choosing $\lambda \in \Lambda$ such that Eq.~\eqref{eq:lambdacondition} is verified. Conversely, if $(u,\lambda) \in \mathcal X_{00} \times \Lambda$ is a solution for \weakref{weak:saddleweak}, then $u \in \mathcal V$ because of the second condition in Eq.~\eqref{eq:saddleweak}. Moreover, for each $v \in \mathcal V$, $b(v,\xi) = 0$ for all $\xi \in \Lambda$ and, in particular, for $\xi = \lambda$, and the first condition in Eq.~\eqref{eq:saddleweak} becomes Eq.~\eqref{eq:fullpoissonweak}.
\end{proof}
\begin{remark}
If we consider the Poisson equation~\eqref{eq:poisson}, then Eq.~\eqref{eq:lambdacondition} is verified by taking  $\boldsymbol \phi = - \nabla u$ and by choosing $\lambda \in \Lambda$ such that $\boldsymbol \phi \cdot \textbf n_1 = \lambda$, $\mathbf n_1$ being the outward unit vector normal to $\partial \Omega_1$. Indeed, by using integration by parts we find for all $v \in \mathcal X_{00}$
\begin{align}
b(v,\lambda) &= \langle \boldsymbol \phi \cdot \textbf n_1, [v]_{\Gamma} \rangle_{\Lambda} = - \sum_{i=1}^2 \int_{\Gamma}  \nabla u \cdot \textbf n_i v \, \text{d}\textbf{s} \\
&= \sum_{i=1}^2 \left (\int_{\Omega_i} fv\,\text{d}\textbf{x} - \int_{\Omega_i} \nabla u \cdot \nabla v\, \text{d}\textbf{x} \right )= \langle f,v \rangle - a(u,v),
\end{align}
where we used the fact that $\textbf n_1 = -\textbf n_2$. Note that, if we defined the jump across the interface of a function $\varphi \in \mathcal X_{00}$ as the difference of the traces on $\Gamma$ of $\varphi|_{\Omega_1}$ and $\varphi|_{\Omega_2}$, then  $\boldsymbol \phi \cdot \textbf n_2 = \lambda$. Hence, the Lagrange multiplier in Eq.~\eqref{eq:saddleweak} plays the role of the normal derivative of $u$ at the interface $\Gamma$ \cite{wohlmuth2000mortar}, with the direction of the normal at the interface being determined by the definition of the jump.
\label{remark:derivative}
\end{remark}

\section{Discretization of the primal hybrid formulation}
\label{sec:discretization}
We now consider the discretization of the weak formulation \weakref{weak:saddleweak}.
We take two arbitrary finite dimensional functional spaces $\mathcal X^{h,(1)} \subset \mathcal X^{(1)}$ and $\mathcal X^{h,(2)}\subset \mathcal X^{(2)}$ spanned by two sets of basis functions $\varphi^{(1)}_i \in \mathcal X^{(1)}$ (with $i = 1,\ldots,n_\text{bf}^{(1)}$) and $\varphi^{(2)}_i \in \mathcal X^{(2)}$ (with $i = 1,\ldots,n^{(2)}_\text{bf}$)  respectively. We assume that functions in $\mathcal X^{h,(1)}$ and $\mathcal X^{h,(2)}$ can be trivially extended by zero in the other domain and that such extension belong to $\mathcal X_{00}$. The discrete version of the global space $\mathcal X_{00}$ is consequently obtained by considering the space $\mathcal X^h \subset \mathcal X_{00}$ of dimension $\text{dim}(\mathcal X^h) = n_{\text{bf}} = n^{(1)}_{\text{bf}} + n^{(2)}_{\text{bf}}$ and spanned by the basis functions
\begin{equation}
\{\varphi_i\}_{i = 1}^{n_{\text{bf}}} = \{ \varphi^{(1)}_i\}_{i = 1}^{n^{(1)}_{\text{bf}}} \cup \{ \varphi^{(2)}_i\}_{i = 1}^{n^{(2)}_\text{bf}}.
\end{equation}
The solution can be then approximated as $u \approx u^h = \sum_{i = 1}^{n_\text{bf}} u_{i} \varphi_i$. In the numerical applications in Section~\ref{sec:applications}, we will consider standard finite element Lagrangian basis functions built over suitable triangulations $\mathcal T^{h,(1)}$ and $\mathcal T^{h,(2)}$ of $\Omega_1$ and $\Omega_2$ respectively for the discretization of $\mathcal{X}^{(1)}$ and $\mathcal{X}^{(2)}$; we will always assume that such triangulations meet standard regularity requirements \cite{quarteroni2008numerical}, but we do not require the conformity of the global mesh $\mathcal T^h = \mathcal T^{h,(1)} \cup \mathcal T^{h,(2)}$. We define conforming meshes those meshes for which the intersection of two elements is either null, a vertex or a whole edge; in non-conforming meshes, on the contrary, two elements can also share portions of their edges. The discretization parameter $h$ is generic and defines a family of discretized spaces; when using finite elements, for example, $h$ refers to the maximum edge length of an element -- often called mesh size -- in the triangulations of $\Omega_1$ and $\Omega_2$. More generally, $h$ could be also considered a characteristic of the single subdomain, since -- as we already mentioned -- the discretizations in $\Omega_1$ and $\Omega_2$ are independent one of the other and could be obtained from different discretization methods (e.g. finite elements for $\Omega_1$ and isogeometric analysis for $\Omega_2$).

Our proposition is to discretize $\Lambda$ as $\Lambda^\delta$ by using a set of basis functions $\xi_i \in \Lambda$, such that $\lambda \in \Lambda$ is approximated as $\lambda \approx \lambda^\delta =\sum_{i = 1}^{n_\Gamma} \lambda_{i} \xi_i$. We remark that we characterize the refinement levels for $\mathcal X^{h,(1)}$, $\mathcal X^{h,(2)}$ and $\Lambda^\delta$ with different discretization parameters $h$ and $\delta$: this is to indicate that the discretization of $\Lambda$ is indeed independent of the discretization on $\Omega_1$ and $\Omega_2$. For instance, in the two-dimensional case, a suitable choice would consist of choosing as $\xi_i$ the basis functions associated to the low-frequencies of the Fourier basis defined on the common interface $\Gamma$, and the accuracy of the discretization of $\Lambda^\delta$ can be increased independently of $h$ by adding Fourier basis functions to the set $\xi_i$. In the numerical simulations of Section \ref{sec:applications} we will follow this approach. Alternative  possibilities for the discretization of the Lagrange multiplier space include other spectral basis functions, such as e.g. Legendre or Chebyshev polynomials.

%The weak formulation of the discretized problem is simply obtained from \weakref{weak:splitpoisson} by replacing $\mathcal V$ by its discrete approximation $\mathcal V^{h,\delta}$. If the space $\Lambda^\delta$ contains the constant functions, then the discretized problem has a unique solution \cite{seshaiyer2003stability}.

The discrete space for the approximation of $\mathcal V$ is then defined as
\begin{equation}
\mathcal V^{h,\delta} := \{ \varphi^h \in \mathcal X^h: b(\varphi^h, \xi^\delta)= 0\quad \forall \xi^\delta \in \Lambda^\delta \},
\end{equation}

\begin{remark} $\mathcal V^{h,\delta}$ is not a subspace of $\mathcal V$. As a matter of fact, if $\Lambda^\delta$ is not equal to $\Lambda$, then there may exist $\xi \in \Lambda$, $\xi  \not \in \Lambda^\delta$ such that $b(\varphi^h, \xi) \neq 0$ for some $\varphi^h \in \mathcal V^{h,\delta}$, and therefore $\varphi^h \not \in \mathcal V$. If we replaced $\mathcal V$ by $\mathcal V^{h,\delta}$ in \weakref{weak:fullpoisson}, we would obtain a \textit{non-conforming method}, i.e. a numerical method in which the discretized search space is not contained into the continuous search space. The generalized version of Cea's lemma for this family of methods is Strang's second lemma \cite{ciarlet2002finite}, which states that the solution $u^h$ of the discretized version of \weakref{weak:fullpoisson} satisfies
\begin{equation}
\Vert u - u^h \Vert_{\mathcal V^{h,\delta}} \leq C \left ( \inf_{v^h \in \mathcal V^{h,\delta}} \Vert u - v^h \Vert_{\mathcal{V}^{h,\delta}} + \sup_{w^h \in \mathcal{V}^{h,\delta}} \dfrac{| a(u,w^h) - \langle f,w^h \rangle |}{\Vert w^h \Vert_{\mathcal V^{h,\delta}}} \right ),
\label{eq:strang}
\end{equation}
where $C>0$ and $\Vert \cdot \Vert_{\mathcal V^{h,\delta}}$ is a norm for $\mathcal V^{h,\delta}$. Note that the consistency error -- i.e. the second term of the right hand side in Eq.~\eqref{eq:strang} -- is identically zero for each $w^h \in \mathcal V^{h,\delta}$ if $\mathcal V^{h,\delta} \subset V$ because $u$ is a solution of \weakref{weak:fullpoisson}.
\end{remark}

The discretization of \weakref{weak:saddleweak} is simply obtained by replacing the continuous functional spaces with their discrete counterparts, namely:
\begin{weakform}{}
given $f \in H^{-1}(\Omega)$, find $u^h \in  \mathcal X^h$ and $\lambda^\delta \in \Lambda^\delta$ such that
\begin{equation}
\begin{alignedat}{3}
&a(u^h,v^h) + b(v^h,\lambda^\delta) = \langle f,v^h \rangle &&\quad \forall v^h \in \mathcal X^h, \\
&b(u^h,\eta^\delta) = 0 &&\quad \forall \eta^\delta \in \Lambda^\delta.
\end{alignedat}
\label{eq:saddleweak_h}
\end{equation}
\label{weak:saddleweak_h}
\end{weakform}
%\begin{remark}
%$\mathcal V^{h,\delta}$ is the kernel of the bilinear form $b(\cdot,\cdot)$, i.e.
%\begin{equation}
%\mathcal V^{h,\delta} \equiv \text{Ker}\{b(\cdot,\cdot)\} := \{ v^h \in \mathcal X^h: b(v^h, \eta^\delta) = 0 \quad \forall \eta^\delta \in \Lambda^\delta\}.
%\end{equation}
%\end{remark}

By expanding $u^h$ and $\lambda^\delta$ on their respective bases, Eq.~\eqref{eq:saddleweak_h} can be rewritten in system form as
\begin{equation}
\begin{bmatrix}
A & B^T \\
B & 0
\end{bmatrix}
\begin{bmatrix}
\textbf u \\
\boldsymbol \lambda
\end{bmatrix}
=
\begin{bmatrix}
\textbf f \\
\textbf 0
\end{bmatrix},
\label{eq:algebraic}
\end{equation}
where $A_{ij} = a(\varphi_j, \varphi_i)$, $B_{ij} = b(\varphi_j, \xi_i)$, $\textbf u_i = u_i$, $\boldsymbol \lambda = \lambda_i$ and $\textbf f_i = \langle f, \varphi_i \rangle$. By arranging the basis functions $\varphi_n$ and the degrees of freedom such that all the basis functions corresponding to $\Omega_1$ come before those of $\Omega_2$, system \eqref{eq:algebraic} can be written as
\begin{equation}
\begin{bmatrix}
A_1 & 0 & -B_1^T \\
0     & A_2 & B_2^T \\
-B_1 & B_2 & 0 \\
\end{bmatrix}
\begin{bmatrix}
\textbf u_1 \\
\textbf u_2 \\
\boldsymbol \lambda
\end{bmatrix}
=
\begin{bmatrix}
\textbf f_1 \\
\textbf f_2 \\
\textbf 0
\end{bmatrix},
\label{eq:algebraic_div}
\end{equation}
where $(B_1)_{ij} = \int_{\Gamma} \varphi^{(1)}_j \xi_i\, \text d\textbf{s}$ and $(B_2)_{ij} = \int_{\Gamma} \varphi^{(2)}_j \xi_i \, \text d\textbf{s}$ are coupling matrices. Clearly, $B_1$ and $B_2$ are likely to be sparse, as only the basis functions $ \varphi^{(1)}_i$ and  $ \varphi^{(2)}_i$ not vanishing on $\Gamma$ lead to non-zero integrals.

In this paper, the computation of the coupling matrices is performed by numerically integrating by Gauss quadrature rules \cite{quarteroni2010numerical} the integrals. Let us consider for instance the case of $B_1$ in the two dimensional case. The triangulation $\mathcal T^{h,(1)}$ induces on $\Gamma$ a partition into $n_{\text{el},\Gamma}^{(1)}$ elements, i.e. $\Gamma = \bigcup_{i = 1}^{n_{\text{el},\Gamma}^{(1)}} E_i^{(1)}$.  Given a Gauss quadrature rule of order $2q-1$, the approximation of each term of $B_1$ is computed as
\begin{equation}
(B_1)_{mn} = \int_{\Gamma} \varphi^{(1)}_n \xi_m\, \text d\textbf{s} = \sum_{i = 1}^{n_{\text{el},\Gamma}^{(1)}} \int_{\Gamma} \varphi^{(1)}_n \xi_m\, \text d\textbf{s} \approx  \sum_{i = 1}^{n_{\text{el},\Gamma}^{(1)}} \sum_{j = 1}^{q}  |\text{det}(J_i)|\varphi^{(1)}_n (\phi_i(\mathbf x_{j}^\text{gq})) \xi_m (\phi_i(\mathbf x_{j}^\text{gq})) \omega_{j},
\label{eq:apprB}
\end{equation}
where $\text{det}(J_i)$ is the determinant of the Jacobian of the map $\phi_i: E_i^{(1)} \rightarrow (-1,1)$ from $E_i^{(1)}$ to the reference interval $(-1,1)$, $\mathbf x_{j}^\text{gq}$ is the $j^\text{th}$ Gauss quadrature node in $(-1,1)$ and $\omega_{j}$ is the associated weight. As it is evident from Eq.~\eqref{eq:apprB}, in order to compute the approximation of $B_1$ it is sufficient to being able to evaluate the product $\varphi^{(1)}_n (\phi_i(\mathbf x_{j}^\text{gq})) \xi_m (\phi_i(\mathbf x_{j}^\text{gq}))$ at each quadrature node.
\subsection{Generalization to multiple subdomains}
\begin{figure}
\centering
\begin{minipage}{0.45\textwidth}
\resizebox{0.8\textwidth}{!}{\import{pictures/}{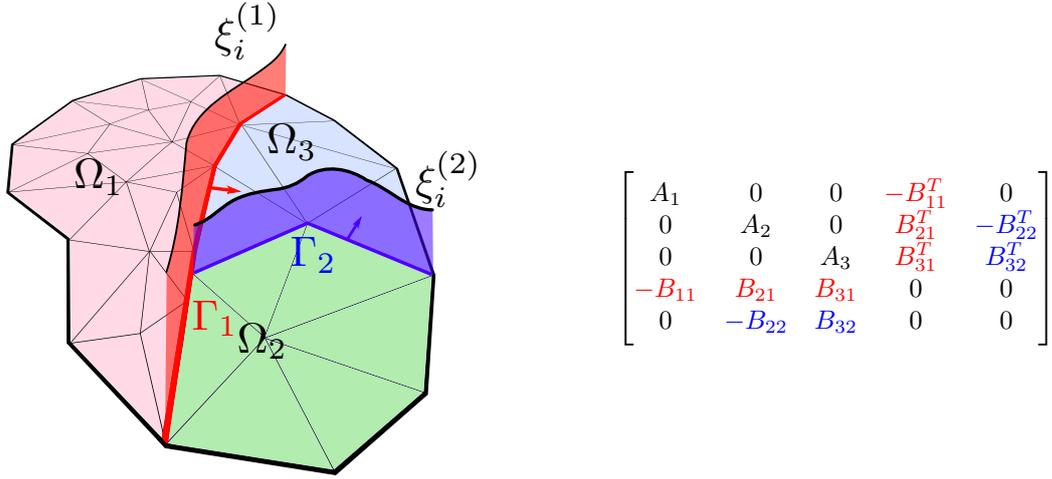}}
\end{minipage}
\begin{minipage}{0.45\textwidth}
\begin{equation*}
\begin{bmatrix}
A_1 & 0 & 0 & \textcolor{red}{-B_{11}^T} & 0 \\
0     & A_2 & 0 &  \textcolor{red}{B_{21}^T} & \textcolor{blue}{ -B_{22}^T} \\
0 & 0 & A_3 &  \textcolor{red}{B_{31}^T} &  \textcolor{blue}{B_{32}^T}\\
 \textcolor{red}{-B_{11}} &  \textcolor{red}{B_{21}} &  \textcolor{red}{B_{31}} & 0 & 0 \\
0 &  \textcolor{blue}{-B_{22}} &  \textcolor{blue}{B_{32}} & 0 & 0 \\
\end{bmatrix}
\end{equation*}
\end{minipage}
\caption{On the left, example of a three-way partition of $\Omega$ with two interfaces; each interface corresponds to a Lagrange multiplier space. On the right, corresponding system matrix obtained from the discretization of the spaces.}
\label{fig:example_partition}
\end{figure}
\label{subsec:generalization}
In the previous sections we decided to limit ourselves to the case where the domain of the PDE  is partitioned into two subdomains. This choice is motivated mainly by the fact that considering the generic case of multiple subdomains leads necessarily to complexity in the notation. We refer the reader to the already mentioned references \cite{braess1999multigrid,belgacem1999mortar} for examples of how the functional spaces we considered in Section~\ref{sec:method} could be adapted to the case of multiple subdomains.
One aspect that differentiates our approach from other methods (such as the mortar method) is that, in the discretization process, our method requires to define a set of basis functions for the Lagrange multiplier space of each interface. These bases can be chosen independently one from the other.

As an example, Fig.~\ref{fig:example_partition} (left) shows a three-way partition of $\Omega$ into three domains with two interfaces. Each of the interfaces $\Gamma_1$ and $\Gamma_2$ requires the definition of a corresponding space for the Lagrange multipliers. After the discretization, the matrix of the algebraic system can be written as displayed in  Fig.~\ref{fig:example_partition}~(right), where the matrices $(B_{ij})_{mn} = \int_{\Gamma_{j}} \varphi^{(i)}_n \xi^{(j)}_m\,\text{d}\textbf{s}$ discretize the coupling between the $i^\text{th}$ domain and the $j^{\text{th}}$ interface. We remark that the signs of the coupling matrices are determined by the definition of the normals at each interface. Notice that, besides the choice of the orientation of the normals at the interfaces, there is no hierarchy among the subdomains.
\section{Relationship with other non-conforming methods}
\label{sec:relationship}
\subsection{Relationship with the mortar method}
\label{subsec:relationmortar}
The mortar method can be derived from the same problem written in primal hybrid formulation \weakref{weak:saddleweak} we considered in Section~\ref{sec:method} \cite{wohlmuth2000mortar}. Its discretized weak formulation could be rewritten in the form of a saddle-point problem similar to Eq.\eqref{eq:saddleweak} in which the space of Lagrange multiplier $\Lambda^h_{M}$ depends on the discretization of either $\Omega_1$ or $\Omega_2$; see \cite{seshaiyer1998convergence,seshaiyer2003stability}. In particular, the classic mortar method requires assigning to $\Omega_1$ or to $\Omega_2$ the role of master and slave domains. The basis functions of $\Lambda^h_{M}$ are chosen as the trace of the basis functions defined over the triangulation of the slave domain which do not vanish on $\Gamma$; the polynomial order of the basis functions on the extrema of $\Gamma$ is usually decreased by one.

With respect to the mortar method, we believe that the main advantages of our approach are the following
\begin{enumerate}
\item{
the accuracy of the coupling can be increased or decreased by varying the number of Lagrange multipliers at the interfaces independently of the discretization in the subdomains;
}
\item{
the solution is independent of the partition of the subdomains into master and slaves;
}
\item{
the computation of the coupling matrices does not require projections between meshes, which makes the implementation of the method easier.
}
\end{enumerate}
One drawback of our method is that the Lagrange multiplier space has to be rich enough to provide the necessary accuracy, but coarse enough to satisfy the inf-sup condition as described in Section~\ref{sec:infsup}.

\subsection{Relationship with INTERNODES}
The INTERNODES (INTERpolation for NOnconforming DEcompositionS) method \cite{deparis2016internodes, forti2016parallel} is based on an interpolation approach, rather than the $L^2$-projection approach which characterizes the mortar method. Given each interface, the two adjacent subdomains are given the role of master and slave domains. Similarly to the mortar method, the traces of the (finite element or spectral element) basis functions defined over the meshes of the master and slave domains are used to enforce the continuity of the solution and the normal stresses. More precisely, two interpolation operators -- or intergrid operators -- are defined: the interpolation operator from the master to the slave domain is used to ensure the continuity of the solution, while the interpolation operator from the slave to the master domain enforces the continuity of the normal fluxes. INTERNODES has been proven to retain the optimal convergence properties of the mortar method. For more information about the method and its analysis, we refer the reader to \cite{gervasio2016analysis}.

Being INTERNODES closely related to the mortar method, compared to the former our approach offers the same advantages we presented in Section~\ref{subsec:relationmortar} except for 3. Indeed, INTERNODES has the big advantage of being simple to implement and allowing for small geometric non-conformity. We believe that our method is as simple to implement as INTERNODES and that it can be extended to non-conforming geometries with the help of localized Rescaled Radial Basis Interpolation  \cite{forti2016parallel,deparis2014rescaled}. One complexity of INTERNODES comes from the special treatment of integrals at the intersection of the interface $\Gamma$ with portions of the boundary where non-homogeneous Neumann conditions are imposed. The method proposed here does not need such special treatment.
\subsection{Relationship with the three-field method}
The three-field method was originally proposed in \cite{brezzi1994three} and analyzed in \cite{brezzi2001error}. Compared to the mortar method, it has had significantly less impact on the domain decomposition community.

The multidomain extension of the weak formulation \weakref{weak:fullpoisson} by the three fields method reads \cite{quarteroni1999domain}:
\begin{weakform}{}
for $i = 1,2,$ find  $u^{(i)} \in  \mathcal X^{(i)}$, $\sigma^{(i)} \in H^{-1/2}(\Gamma)$ and $\psi \in H^{1/2}(\Gamma)$ such that
\begin{equation}
\begin{alignedat}{3}
&a(u^{(1)},v^{(1)}) - \langle \sigma^{(1)}, v^{(1)} \rangle_{H^{-1/2}(\Gamma)} = \langle f, v^{(1)} \rangle && \quad \forall v^{(1)} \in \mathcal X^{(1)}, \\
& \langle \eta^{(1)}, \psi - u^{(1)} \rangle_{H^{-1/2}(\Gamma)} = 0 &&\quad \forall \eta^{(1)} \in H^{-1/2}(\Gamma), \\
& \langle \sigma^{(1)} + \sigma^{(2)}, \rho \rangle_{H^{-1/2}(\Gamma)} = 0 &&\quad \forall \rho \in H^{1/2}(\Gamma),\\
& \langle \eta^{(2)}, \psi - u^{(2)} \rangle_{H^{-1/2}(\Gamma)} = 0 &&\quad \forall \eta^{(2)} \in H^{-1/2}(\Gamma),\\
&a(u^{(2)},v^{(2)}) - \langle \sigma^{(2)}, v^{(2)} \rangle_{H^{-1/2}(\Gamma)} = \langle f, v^{(2)} \rangle && \quad \forall v^{(2)} \in \mathcal X^{(2)}.
\end{alignedat}
\label{eq:threefield}
\end{equation}
\label{weak:threefield}
\end{weakform}
It can be proven (\cite[Proposition 1.7.1]{quarteroni1999domain}) that if $u$ is the solution of \weakref{weak:fullpoisson} and  $u^{(i)}$, $\sigma^{(i)}$, $\psi$ are solutions of \weakref{weak:threefield}, then $u^{(i)} = u|_{\Omega_i}$, $\sigma^{(i)} = (\nabla_L  u \cdot \mathbf n_i)_\Gamma$ (where $\nabla_L u \cdot \mathbf n_i$ indicates the conormal derivative of $u$ with respect to the normal vector $\mathbf n_i$), and $\psi = u|_{\Gamma}$.

The weak formulation \weakref{weak:saddleweak} we derived in Section~\ref{sec:method} can be interpreted as a particular case of \weakref{weak:threefield}. Indeed, let us firstly restrict the space $H^{1/2}(\Gamma)$ to its embedded subset $H^{1/2}_{00}(\Gamma)$ and let us consider the particular case in which $\lambda = \sigma^{(1)} = - \sigma^{(2)} \in \Lambda = H^{-1/2}_{00}(\Gamma)$: then, the third equation in Eq.~\eqref{eq:threefield} is automatically satisfied for all choices of $\rho \in H^{1/2}_{00}(\Gamma)$. Moreover, subtracting the second and fourth equations evaluated at the same $\eta^{(1)} = \eta^{(2)} = \eta \in \Lambda$ yields
\begin{equation}
\langle \eta, \psi - u^{(1)} \rangle_\Lambda - \langle \eta, \psi - u^{(2)} \rangle_ \Lambda = \langle \eta, u^{(2)}- u^{(1)}  \rangle _\Lambda.
\label{eq:bnew}
\end{equation}
Obviously, this duality is well defined only if $u^{(2)}- u^{(1)}$ belongs to $H^{1/2}_{00}(\Gamma)$. We, therefore, set $u \in \mathcal X_{00}$ such that $u^{(1)} = u|_{\Omega_1}$ and $u^{(2)} = u|_{\Omega_2}$; Eq.~\eqref{eq:bnew} can be then rewritten as $b(u, \eta) = 0$ for all $\eta \in \Lambda$, i.e. the second equation in  Eq.~\eqref{eq:saddleweak}. The first equation in Eq.~\eqref{eq:saddleweak} is found by adding the first and last equations in \eqref{eq:threefield} tested for all $v \in \mathcal X_{00}$ such that $v^{(1)} = v|_{\Omega_1}$, $v^{(2)} = v|_{\Omega_2}$; observe that also in this case it is necessary to restrict the search space for $v$ to $\mathcal X_{00}$, in order to ensure the well-posedness of $b(v,\lambda) = \langle \lambda, v^{(2)} - v^{(1)} \rangle_{\Lambda}$.

Although \weakref{weak:saddleweak} and \weakref{weak:threefield} are equivalent, their discretizations are not. Indeed, in the three-field method, it is necessary to define the discretizations of the variational spaces of $\sigma^{(1)}$, $\sigma^{(2)}$ and $\psi$.  In contrast, when discretizing W2, the third equation of W4 is not approximated but solved exactly and the second and fourth are merged into a single equation. As we have shown, setting $\sigma^{(1)} = -\sigma^{(2)}$ is efficient because it allows to automatically satisfy the third equation in Eq.~\eqref{eq:threefield}, thus reducing the number of variables. Our approach limits to one the number of spaces to be discretized for each interface, thus allowing better control of the stability of the method.

\section{Inf-sup condition of the discretized problem} \label{sec:infsup}
Problems \weakref{weak:saddleweak} and \weakref{weak:saddleweak_h} are saddle-point problems \cite{quarteroni2014numerical}. As such, their well-posedness depends on the Ladyschenskaja-Babuška-Brezzi inf-sup condition \cite{brezzi1974existence}, which sets the requirements for the uniqueness of the solution as well as the stability of the sequence of problems depending on the discretization parameters (e.g. the mesh size $h$ or the number of basis functions on the interface $n_\Gamma$). We refer the reader to \cite{brezzi1974existence} and \cite{brezzi1990discourse} for a comprehensive description of the inf-sup condition from the functional and algebraic point of view respectively. In this section, we specifically address the well-posedness of \weakref{weak:saddleweak_h}, and we limit ourselves to recall that if the space $\Lambda$ is characterized as in \eqref{eq:lambdadef}, the continuous problem \weakref{weak:saddleweak} has a unique solution \cite{braess1999multigrid}.

Before stating the main stability result for \weakref{weak:saddleweak_h}, we recall that we characterize $a(\cdot,\cdot)$ and $b(\cdot,\cdot)$ as continuous if there exist $\kappa_a > 0$ and $\kappa_b > 0$ such that $a(\varphi,\psi) \leq \kappa_a \Vert \varphi \Vert_\mathcal{X}  \Vert \psi \Vert_\mathcal{X}$ for every $\varphi,\psi \in \mathcal X$ and $b(\varphi, \xi) \leq \kappa_b \Vert \varphi \Vert_{\mathcal X} \Vert \xi \Vert_{\Lambda}$ for every $\varphi \in \mathcal X$, $\xi \in \Lambda$.

The following theorem prescribes the conditions for the well-posedness of \weakref{weak:saddleweak_h}.

\begin{theorem}{\cite[Theorem 3.2]{brezzi1990discourse}}
Assume that $a(\cdot,\cdot)$ and $b(\cdot,\cdot)$ are continuous with constants $k_a>0$ and $k_b>0$, and that there exist $\alpha > 0$ and $\beta > 0$ such that $\mathcal X^h$, $\mathcal V^{h,\delta}$ and $\Lambda^\delta$ satisfy the conditions
\begin{align}
&\inf_{v^h \in \mathcal V^{h,\delta}} \dfrac{a(v^h,v^h)}{\Vert v^h \Vert_{\mathcal{X}}^2} \geq \alpha, \label{eq:condition1}\\
&\inf_{\eta^\delta \in \Lambda^\delta} \sup_{v^h \in \mathcal X^h} \dfrac{b(v^h,\eta^\delta)}{\Vert v^h \Vert_{\mathcal{X}}\Vert \eta^\delta \Vert_{\Lambda}} \geq \beta. \label{eq:condition2}
\end{align}
Then \weakref{weak:saddleweak_h} has a unique solution. Moreover, there exists a constant $C \geq 0$, depending only on $\kappa_a$, $\kappa_b$, $\alpha$ and $\beta$, such that
\begin{equation}
\Vert u - u^h \Vert_{\mathcal X} + \Vert \lambda - \lambda^\delta \Vert_{\Lambda} \leq C \left ( \inf_{v^h \in \mathcal X^h} \Vert u - v^h \Vert_{\mathcal X} + \inf_{\eta^\delta \in \Lambda^\delta} \Vert \lambda - \eta^\delta \Vert_{\Lambda} \right ),
\label{eq:estimate}
\end{equation}
where $(u,\lambda)$ is the solution of \weakref{weak:saddleweak}.
\label{theorem:existence}
\end{theorem}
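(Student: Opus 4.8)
The plan is to recognize Theorem~\ref{theorem:existence} as the abstract Babu\v{s}ka--Brezzi theorem for the discrete saddle-point problem \weakref{weak:saddleweak_h}, and to establish it in two stages: first well-posedness (existence and uniqueness of $(u^h,\lambda^\delta)$), then the a priori bound \eqref{eq:estimate}. The structural observation that drives everything is that, although the discrete kernel $\mathcal V^{h,\delta}$ is \emph{not} contained in $\mathcal V$, the discretization is conforming at the level of the full spaces, i.e. $\mathcal X^h \subset \mathcal X_{00}$ and $\Lambda^\delta \subset \Lambda$. Consequently, testing the continuous problem \weakref{weak:saddleweak} against discrete functions and subtracting \weakref{weak:saddleweak_h} yields the Galerkin orthogonality relations
\begin{equation}
a(u - u^h, v^h) + b(v^h, \lambda - \lambda^\delta) = 0, \qquad b(u - u^h, \eta^\delta) = 0,
\end{equation}
valid for all $v^h \in \mathcal X^h$ and $\eta^\delta \in \Lambda^\delta$; these replace the consistency term that appeared in Strang's lemma earlier in the paper.

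For well-posedness I would exploit that \weakref{weak:saddleweak_h} is the square finite-dimensional system \eqref{eq:algebraic}, so existence is equivalent to uniqueness and it suffices to show that $f = 0$ forces $(u^h,\lambda^\delta) = (0,0)$. The second equation of \eqref{eq:saddleweak_h} gives $u^h \in \mathcal V^{h,\delta}$; choosing $v^h = u^h$ in the first equation kills the $b$-term, so $a(u^h,u^h) = 0$, and the kernel coercivity \eqref{eq:condition1} forces $u^h = 0$. The first equation then reduces to $b(v^h,\lambda^\delta) = 0$ for all $v^h \in \mathcal X^h$, and the discrete inf-sup condition \eqref{eq:condition2} forces $\lambda^\delta = 0$.

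For the error estimate I would bound $\Vert u - u^h\Vert_{\mathcal X}$ first. Taking any $w^h \in \mathcal V^{h,\delta}$, I have $u^h - w^h \in \mathcal V^{h,\delta}$, so combining \eqref{eq:condition1}, the Galerkin orthogonality, and the fact that $b(u^h - w^h,\cdot)$ vanishes on $\Lambda^\delta$ (which lets me subtract an arbitrary $\eta^\delta$ for free) I expect
\begin{equation}
\alpha \Vert u^h - w^h\Vert_{\mathcal X} \leq \kappa_a \Vert u - w^h\Vert_{\mathcal X} + \kappa_b \Vert \lambda - \eta^\delta\Vert_{\Lambda},
\end{equation}
and the triangle inequality then controls $\Vert u - u^h\Vert_{\mathcal X}$ by the best approximation of $u$ in $\mathcal V^{h,\delta}$ plus that of $\lambda$ in $\Lambda^\delta$. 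The crucial step --- and the one I expect to be the main obstacle --- is converting the best approximation over the \emph{constrained} space $\mathcal V^{h,\delta}$ into the unconstrained quantity $\inf_{v^h \in \mathcal X^h}\Vert u - v^h\Vert_{\mathcal X}$ appearing in \eqref{eq:estimate}. This is a Fortin-type argument: the discrete inf-sup condition \eqref{eq:condition2} says that $v^h \mapsto b(v^h,\cdot)$ maps $\mathcal X^h$ onto $(\Lambda^\delta)'$ with a right inverse bounded by $\beta^{-1}$, so any $v^h \in \mathcal X^h$ can be corrected by an element of $\mathcal X^h$ of norm at most $\beta^{-1}\sup_{\eta^\delta} b(v^h,\eta^\delta)/\Vert\eta^\delta\Vert_{\Lambda}$ to land in $\mathcal V^{h,\delta}$; since $u \in \mathcal V$ makes $b(u,\eta^\delta) = 0$, that supremum equals $\sup_{\eta^\delta} b(v^h - u,\eta^\delta)/\Vert\eta^\delta\Vert_{\Lambda} \leq \kappa_b \Vert u - v^h\Vert_{\mathcal X}$, whence $\inf_{w^h \in \mathcal V^{h,\delta}}\Vert u - w^h\Vert_{\mathcal X} \leq (1 + \kappa_b/\beta)\inf_{v^h \in \mathcal X^h}\Vert u - v^h\Vert_{\mathcal X}$.

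Finally, to bound $\Vert \lambda - \lambda^\delta\Vert_{\Lambda}$ I would apply the discrete inf-sup to $\lambda^\delta - \eta^\delta \in \Lambda^\delta$, rewrite $b(v^h,\lambda^\delta - \eta^\delta)$ using the Galerkin orthogonality to trade the $\lambda^\delta - \lambda$ contribution for $a(u - u^h, v^h)$, and obtain $\beta\Vert\lambda^\delta - \eta^\delta\Vert_{\Lambda} \leq \kappa_a\Vert u - u^h\Vert_{\mathcal X} + \kappa_b\Vert\lambda - \eta^\delta\Vert_{\Lambda}$; a triangle inequality together with the already-established bound on $\Vert u - u^h\Vert_{\mathcal X}$ then yields \eqref{eq:estimate} with a constant $C$ assembled from $\kappa_a,\kappa_b,\alpha,\beta$. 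The routine part is merely the bookkeeping of these constants; the conceptual content lies entirely in the Galerkin orthogonality and in the Fortin construction of a bounded right inverse inside the kernel.
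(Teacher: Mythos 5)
The paper contains no proof of this theorem to compare against: the result is imported verbatim from \cite[Theorem 3.2]{brezzi1990discourse}, and the paper simply cites it. Your argument is a correct reconstruction of the standard proof found in that reference, and all five of its ingredients are sound: (i) Galerkin orthogonality, which is legitimate here because the discretization is conforming at the level of the spaces ($\mathcal X^h \subset \mathcal X_{00}$, $\Lambda^\delta \subset \Lambda$) even though $\mathcal V^{h,\delta} \not\subset \mathcal V$ -- a point you correctly isolate as the reason no consistency term appears; (ii) the observation that the discrete system \eqref{eq:algebraic} is square, so injectivity yields existence, with injectivity following from kernel coercivity and the inf-sup condition; (iii) coercivity \eqref{eq:condition1} on $\mathcal V^{h,\delta}$ combined with the trick $b(u^h - w^h, \lambda - \lambda^\delta) = b(u^h - w^h, \lambda - \eta^\delta)$ for the primal error; (iv) the bounded right inverse of the discrete coupling operator furnished by \eqref{eq:condition2} (the Fortin argument), together with $b(u,\eta^\delta) = 0$ for $u \in \mathcal V$, to convert the constrained infimum over $\mathcal V^{h,\delta}$ into $\inf_{v^h \in \mathcal X^h} \Vert u - v^h \Vert_{\mathcal X}$; and (v) the inf-sup condition applied to $\lambda^\delta - \eta^\delta$ for the multiplier error. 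One observation worth recording: the three intermediate bounds you derive, with constants $1 + \kappa_a/\alpha$, $\kappa_b/\alpha$, $1 + \kappa_b/\beta$, and $\kappa_a/\beta$, are exactly the estimates the paper states separately as Theorem~\ref{theorem:stability}, i.e.\ Eqs.~\eqref{eq:error_estimate} and \eqref{eq:best_appr} together with the multiplier bound, which the paper likewise only cites (from \cite[Theorem 16.6]{quarteroni2014numerical}). So your single argument proves both cited theorems at once, with \eqref{eq:estimate} obtained by chaining the three bounds and absorbing the constants into $C$.
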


\subsection{Numerical computation of the inf-sup constant}
The inf-sup condition \eqref{eq:condition2} is satisfied whenever $\Lambda^\delta$ is sufficiently ``small'' compared to $\mathcal X^h$. In the applications in Section~\ref{sec:applications} we ensure that $\beta$ exists by numerically computing an approximation $\widetilde \beta$ with the approach presented in \cite{ballarin2015supremizer}, which we briefly summarize here. Let us suppose that $X_\mathcal{X} \in \mathbb{R}^{(n^{(1)} + n^{(2)}) \times (n^{(1)} + n^{(2)})}$ and $X_\Lambda \in \mathbb{R}^{n_\Gamma \times n_\Gamma}$ are norm matrices such that $\Vert v^h \Vert^2_\mathcal{Y} = (X_\mathcal{Y} \textbf v, \textbf v)$ and $\Vert \eta^\delta \Vert^2_\Lambda = (X_\Lambda \boldsymbol \eta, \boldsymbol \eta)$ for every $v \in \mathcal X^h$ and every $\eta \in \Lambda^\delta$. In the previous expressions, we denoted $(\cdot,\cdot)$ the standard scalar product in $\mathbb{R}^m$ ($m = n^{(1)}+n^{(2)}$ or $m = n_\Gamma$) and $\textbf v$ and $\boldsymbol \eta$ the vectors of degrees of freedom of $v^h$ and $\eta^\delta$. Then, we have
\begin{align}
\widetilde \beta &= \inf_{\eta^\delta \in \Lambda^\delta} \sup_{v^h \in \mathcal X^h} \dfrac{b(v^h,\eta^\delta)}{\Vert v^h \Vert_{\mathcal{X}}\Vert \eta^\delta \Vert_{\Lambda}} = \inf_{\boldsymbol \eta \neq \textbf 0} \sup_{\textbf v \neq \textbf 0} \dfrac{\left ( B \textbf v, \boldsymbol \eta \right )}{(X_\mathcal{X} \textbf v, \textbf v)^{1/2} (X_\Lambda \boldsymbol \eta, \boldsymbol \eta)^{1/2}} \\ &= \inf_{\boldsymbol \eta \neq \textbf 0} \dfrac{1}{(X_\Lambda \boldsymbol \eta, \boldsymbol \eta)^{1/2}} \sup_{\textbf w = X_\mathcal{X}^{1/2} \textbf v \neq \textbf 0}\dfrac{\left ( \textbf w, X_\mathcal{X}^{-1/2} B^T \boldsymbol \eta \right )}{(\textbf w, \textbf w)^{1/2}} \\ &= \inf_{\boldsymbol \eta \neq \textbf 0} \dfrac{\left ( X_\mathcal{X}^{-1/2} B^T \boldsymbol \eta, X_\mathcal{X}^{-1/2} B^T \boldsymbol \eta\right )^{1/2} }{(X_\Lambda \boldsymbol \eta, \boldsymbol \eta)^{1/2}} = \inf_{\boldsymbol \eta \neq \textbf 0} \dfrac{\left ( B X_\mathcal{X}^{-1} B^T \boldsymbol \eta, \boldsymbol \eta\right )^{1/2} }{(X_\Lambda \boldsymbol \eta, \boldsymbol \eta)^{1/2}}.
\end{align}
Introducing now the following generalized eigenvalue problem
\begin{equation}
\begin{bmatrix}
X_\mathcal{X} & B^T \\
B & 0
\end{bmatrix}
\begin{bmatrix}
\textbf v \\
\boldsymbol \eta
\end{bmatrix}
=
- \sigma
\begin{bmatrix}
0 & 0 \\
0 & X_\Lambda
\end{bmatrix}
\begin{bmatrix}
\textbf v \\
\boldsymbol \eta
\end{bmatrix},
\label{eq:generalized}
\end{equation}
and recognizing that we have
\begin{equation}
B X^{-1}_\mathcal{X} B^T \boldsymbol \eta = \sigma X_\Lambda \boldsymbol \eta \quad \Rightarrow \quad  \sigma = \dfrac{\left ( B X^{-1}_\mathcal{X} B^T \boldsymbol \eta, \boldsymbol \eta \right ) }{\left ( X_\Lambda \boldsymbol \eta, \boldsymbol \eta \right ) },
\label{eq:generalized_2}
\end{equation}
we conclude that $\widetilde \beta$ can be computed as the square root of the minimum eigenvalue of Eq.~\eqref{eq:generalized}, i.e. $\widetilde \beta = \sqrt{\sigma_{\text{min}}}$. For an application of this strategy, we refer the reader to the results presented in Fig.~\ref{fig:infsup}.

\subsection{Convergence result for saddle-point problems}
We close this Section by focusing on the convergence of problem \weakref{weak:saddleweak_h}. The following theorem gives a sharper bound than Eq.~\eqref{eq:estimate} to the estimate of the approximation error.
\begin{theorem}{\cite[Theorem 16.6]{quarteroni2014numerical}}
Let the assumptions of Theorem~\ref{theorem:existence} be satisfied. Then the solution $(u,\lambda)$ of \weakref{weak:saddleweak} and the solution $(u^h,\lambda^\delta)$ of \weakref{weak:saddleweak_h} satisfy the following error estimates
\begin{align}
&\Vert u - u^h \Vert_{\mathcal X} \leq \left (1 + \dfrac{\kappa_a}{\alpha} \right) \inf_{v^h_* \in \mathcal V^{h,\delta}} \Vert u - v_*^h \Vert_{\mathcal X} + \dfrac{\kappa_b}{\alpha}\inf_{\eta^\delta \in \Lambda^\delta} \Vert \lambda - \eta^\delta \Vert_{\Lambda},\label{eq:error_estimate}\\
&\Vert \lambda - \lambda^\delta \Vert_{\Lambda} \leq \dfrac{\kappa_a}{\beta}\left (1+\dfrac{\kappa_a}{\alpha} \right ) \inf_{v_*^h \in \mathcal V^{h,\delta}} \Vert u - v^h_* \Vert_{\mathcal X} + \left (1 + \dfrac{\kappa_b}{\beta} + \dfrac{\kappa_a \kappa_b}{\alpha \beta} \right ) \inf_{\eta^\delta \in \Lambda^\delta} \Vert \lambda - \eta^\delta \Vert_{\Lambda}.
\end{align}
Moreover, the following error estimate holds
\begin{equation}
\inf_{v_*^h \in \mathcal V^{h,\delta}} \Vert u - v_*^h \Vert_{\mathcal X} \leq \left ( 1 + \dfrac{\kappa_b}{\beta} \right ) \inf_{v^h \in \mathcal X^h} \Vert u - v^h \Vert_{\mathcal X}.
\label{eq:best_appr}
\end{equation}
\label{theorem:stability}
\end{theorem}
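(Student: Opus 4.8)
The plan is to establish the three estimates in turn, using that the first follows from coercivity on the discrete kernel $\mathcal V^{h,\delta}$, the second from the discrete inf-sup condition together with the first, and the third is a purely approximation-theoretic comparison between the best approximation of $u$ in $\mathcal V^{h,\delta}$ and in the larger space $\mathcal X^h$. Throughout I rely on the inclusions $\mathcal X^h \subset \mathcal X_{00}$, $\Lambda^\delta \subset \Lambda$ and $\mathcal V^{h,\delta}\subset \mathcal X^h$, which let me test the continuous equations of \weakref{weak:saddleweak} against discrete functions, and on the fact that $u\in\mathcal V$ gives $b(u,\eta^\delta)=0$ for every $\eta^\delta\in\Lambda^\delta$.

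First I would bound $\Vert u-u^h\Vert_{\mathcal X}$. Fix $v_*^h\in\mathcal V^{h,\delta}$ and set $w^h:=u^h-v_*^h$; since both $u^h$ and $v_*^h$ annihilate $b(\cdot,\eta^\delta)$ on $\Lambda^\delta$, we have $w^h\in\mathcal V^{h,\delta}$, so \eqref{eq:condition1} gives $\alpha\Vert w^h\Vert_{\mathcal X}^2\le a(w^h,w^h)$. Subtracting the first equations of \weakref{weak:saddleweak} and \weakref{weak:saddleweak_h} tested against $w^h$ yields the Galerkin relation $a(u^h-u,w^h)+b(w^h,\lambda^\delta-\lambda)=0$; using $b(w^h,\lambda^\delta)=0$ and inserting an arbitrary $\eta^\delta\in\Lambda^\delta$ (for which $b(w^h,\eta^\delta)=0$) turns this into $a(u^h-u,w^h)=b(w^h,\lambda-\eta^\delta)$. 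Splitting $a(w^h,w^h)=a(u^h-u,w^h)+a(u-v_*^h,w^h)$, applying continuity of $a$ and $b$, and dividing by $\Vert w^h\Vert_{\mathcal X}$ gives $\alpha\Vert w^h\Vert_{\mathcal X}\le\kappa_a\Vert u-v_*^h\Vert_{\mathcal X}+\kappa_b\Vert\lambda-\eta^\delta\Vert_\Lambda$. A triangle inequality for $\Vert u-u^h\Vert_{\mathcal X}$ and independent infima over $v_*^h$ and $\eta^\delta$ then produce \eqref{eq:error_estimate}.

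Next, for $\Vert\lambda-\lambda^\delta\Vert_\Lambda$ I would invoke the inf-sup condition \eqref{eq:condition2} applied to $\lambda^\delta-\eta^\delta$ for arbitrary $\eta^\delta\in\Lambda^\delta$, giving $\beta\Vert\lambda^\delta-\eta^\delta\Vert_\Lambda\le\sup_{v^h}b(v^h,\lambda^\delta-\eta^\delta)/\Vert v^h\Vert_{\mathcal X}$. The same Galerkin relation as above reads $b(v^h,\lambda^\delta-\lambda)=a(u-u^h,v^h)$, so $b(v^h,\lambda^\delta-\eta^\delta)=a(u-u^h,v^h)+b(v^h,\lambda-\eta^\delta)$; continuity bounds the supremum by $\kappa_a\Vert u-u^h\Vert_{\mathcal X}+\kappa_b\Vert\lambda-\eta^\delta\Vert_\Lambda$. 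A triangle inequality for $\Vert\lambda-\lambda^\delta\Vert_\Lambda$ followed by substitution of the already-proven bound \eqref{eq:error_estimate} for $\Vert u-u^h\Vert_{\mathcal X}$ assembles exactly the claimed constants $\tfrac{\kappa_a}{\beta}(1+\tfrac{\kappa_a}{\alpha})$ and $1+\tfrac{\kappa_b}{\beta}+\tfrac{\kappa_a\kappa_b}{\alpha\beta}$.

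Finally, for \eqref{eq:best_appr} I would use a Fortin-type correction. Given any $w^h\in\mathcal X^h$, since $u\in\mathcal V$ the map $\eta^\delta\mapsto b(u-w^h,\eta^\delta)=-b(w^h,\eta^\delta)$ is a well-defined functional on $\Lambda^\delta$. The inf-sup condition \eqref{eq:condition2} is equivalent to the operator $v^h\mapsto b(v^h,\cdot)$ from $\mathcal X^h$ onto $(\Lambda^\delta)'$ admitting a right inverse bounded by $1/\beta$, so there exists $z^h\in\mathcal X^h$ with $b(z^h,\eta^\delta)=b(u-w^h,\eta^\delta)$ for all $\eta^\delta$ and, by continuity of $b$, $\Vert z^h\Vert_{\mathcal X}\le\tfrac{\kappa_b}{\beta}\Vert u-w^h\Vert_{\mathcal X}$. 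Then $v_*^h:=w^h+z^h\in\mathcal V^{h,\delta}$ and $\Vert u-v_*^h\Vert_{\mathcal X}\le(1+\tfrac{\kappa_b}{\beta})\Vert u-w^h\Vert_{\mathcal X}$; taking the infimum over $w^h\in\mathcal X^h$ yields \eqref{eq:best_appr}. The main obstacle is precisely this last construction: the existence of the lifting $z^h$ with the $1/\beta$ norm control is the reformulation of the discrete inf-sup condition as surjectivity of the coupling operator with bounded right inverse, and making it rigorous is the only ingredient that goes beyond elementary manipulation of the two saddle-point systems and their continuity and coercivity constants.
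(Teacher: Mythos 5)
Your proof is correct and takes essentially the same route as the result the paper relies on: the paper does not prove this theorem itself but cites it from \cite[Theorem 16.6]{quarteroni2014numerical}, and your three steps — coercivity on the discrete kernel $\mathcal V^{h,\delta}$ combined with Galerkin orthogonality for the bound on $\Vert u - u^h \Vert_{\mathcal X}$, the discrete inf-sup condition together with that bound for $\Vert \lambda - \lambda^\delta \Vert_{\Lambda}$, and the bounded-right-inverse (Fortin-type) reformulation of the inf-sup condition for \eqref{eq:best_appr} — constitute exactly the classical saddle-point error analysis, with all constants coming out as stated. The lifting lemma you flag as the only nontrivial ingredient is indeed the standard equivalence between the discrete inf-sup condition and surjectivity of the coupling operator with a right inverse bounded by $1/\beta$, which is elementary in the finite-dimensional setting considered here.
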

Theorem \ref{theorem:stability} shows that, whenever the space of Lagrange multipliers is rich enough (namely the second term in Eq.~\eqref{eq:error_estimate} becomes negligible compared the first one), the approximation of $u$ is essentially bounded by the best approximation error on $\mathcal X$. However, this richness may lower the inf-sup constant $\beta$ and therefore loose the approximation \eqref{eq:best_appr}. It is therefore important to find the correct balance.

We remark that in Eq.~\eqref{eq:strang} we have that increasing the size of the Lagrange multipliers space is equivalent to lowering the size of $V^{h,\delta}$ and, consequently, the supremum in its right hand side. The two error estimates in \eqref{eq:strang} and \eqref{eq:error_estimate} are therefore two equivalent ways of expressing the fact that, if the continuity over the interface $\Gamma$ is enforced strongly enough, the error converges to zero as the error due to the spatial discretization in $\mathcal X^h$. As we show in the next section, we are then able to recover the usual convergence orders for $u$ with respect to the mesh size $h$ when using the finite element method.

\label{subsec:infsup}
\section{Numerical results}
\label{sec:applications}
\newlength
\figureheight
\newlength
\figurewidth

In this section,  we focus on the performance of the method presented in Section~\ref{sec:method} on two-dimensional problems defined over the unit square. The numerical simulations we present are performed with a set of  \textsc{Matlab} scripts which can be freely downloaded\footnote{\url{https://github.com/lucapegolotti/coupling_scripts}}.

For all the simulations, we employ standard piecewise polynomial Lagrangian basis functions defined over suitable triangulations in the subdomains. Regarding the choice of basis functions for $\Lambda^\delta$, we already anticipated in Section~\ref{sec:method} that in this paper we investigate the possibility of using low-frequency Fourier basis functions built on the interface $\Gamma$.

\subsection{Choice of basis functions for the Lagrange multipliers}
Given an interface with length $L$, we consider $\xi_1 = 1$ and, for $i = 1,\ldots,n_\omega$
\begin{equation}
\xi_{2i}(s) =\sin(\omega_i \pi s), \quad \xi_{2i+1} (s) = \cos(\omega_i  \pi s),
\label{eq:fourier_b}
\end{equation}
where $s$ is the arc length of the interface $\Gamma$, $\omega_i = i / L$, and $n_\omega$ is the number of considered frequencies; it holds that $n_\Gamma = 2n_\omega + 1$. With this definition, the set $\{\xi_i\}_{i = 1}^{n_\Gamma}$ forms an orthogonal basis with respect to the $L^2(0,2L)$ scalar product. We choose to employ such basis -- instead of the standard Fourier basis orthogonal (or orthonormal) with respect to the $L^2(0,L)$ scalar product -- because, by considering basis functions with periodicity $L$, we would impose an unnecessary periodicity constraint, in particular, the equality of the functions in $\Lambda^\delta$ and their derivatives at the extrema of $\Gamma$. As a result, we empirically observed that by employing the standard $L^2(0,L)$ orthonormal Fourier basis functions the optimal convergence of the finite element method is retrieved for larger values of $n_\Gamma$ compared to the choice in Eq.~\eqref{eq:fourier_b}. However, utilizing non-orthonormal basis functions \eqref{eq:fourier_b} has a dramatic influence on the condition number of the resulting linear system, which has exponential growth with the increasing number of basis functions on the interface; see Fig.~\ref{fig:condition} (left).

\begin{figure}
\centering
\setlength
\figureheight{0.3\textwidth}
\setlength
\figurewidth{0.8\textwidth}
% This file was created by matlab2tikz.
%
%The latest updates can be retrieved from
%  http://www.mathworks.com/matlabcentral/fileexchange/22022-matlab2tikz-matlab2tikz
%where you can also make suggestions and rate matlab2tikz.
%
\definecolor{mycolor1}{rgb}{0.00000,0.44700,0.74100}%
\definecolor{mycolor2}{rgb}{0.85000,0.32500,0.09800}%
\definecolor{mycolor3}{rgb}{0.92900,0.69400,0.12500}%
\definecolor{mycolor4}{rgb}{0.49400,0.18400,0.55600}%
\definecolor{mycolor5}{rgb}{0.46600,0.67400,0.18800}%
\begin{tikzpicture}

\begin{axis}[%
width=0.4\figurewidth,
height=0.3\figurewidth,
at={(0\figurewidth,0\figureheight)},
scale only axis,
xmin=1.0000000000,
xmax=31.0000000000,
xlabel style={font=\color{white!15!black}},
xlabel={$n_\Gamma$},
ymode=log,
ymin=1000.0000000000,
ymax=100000000000000000000.0000000000,
xminorticks=true,
xmajorgrids,
xminorgrids,
title={Non-orthonormal basis functions},
yminorticks=true,
ymajorgrids,
yminorgrids,
colorbrewer cycle list=Paired,
ylabel style={font=\color{white!15!black}},
ylabel={condition number},
axis background/.style={fill=white},
legend style={at={(1.03,1)},anchor=north west,legend cell align=left,align=left,draw=white!15!black},
yminorgrids=false,xminorgrids=false,xminorticks=false,yminorticks=false,grid style={very thin,gray!13},xtick={1,7,13,19,25,31}
]
\addplot 
  table[row sep=crcr]{%
1.0000000000	3689.6734970349\\
3.0000000000	8466.9166442601\\
5.0000000000	81278.7551884233\\
7.0000000000	2655022.3043047558\\
9.0000000000	109642641.2565604895\\
11.0000000000	4509082710.3547477722\\
13.0000000000	189207435911.7423400879\\
15.0000000000	9293347079329.8632812500\\
17.0000000000	575445292643094.6250000000\\
19.0000000000	41814821916918568.0000000000\\
21.0000000000	841167761742020992.0000000000\\
23.0000000000	266777500399624064.0000000000\\
25.0000000000	1010937137651919360.0000000000\\
27.0000000000	6162807030888860672.0000000000\\
29.0000000000	9387001889389410304.0000000000\\
31.0000000000	45615766028489711616.0000000000\\
};

\addplot 
  table[row sep=crcr]{%
1.0000000000	7090.1825416079\\
3.0000000000	14984.3476592650\\
5.0000000000	99803.8567893715\\
7.0000000000	2590383.5507376948\\
9.0000000000	100815703.2606064081\\
11.0000000000	4039291582.4084076881\\
13.0000000000	159303259613.1100463867\\
15.0000000000	6316119766933.3974609375\\
17.0000000000	268426781905627.7812500000\\
19.0000000000	13003869027502642.0000000000\\
21.0000000000	513728865722882688.0000000000\\
23.0000000000	1663168090210963200.0000000000\\
25.0000000000	25157175581648523264.0000000000\\
27.0000000000	9650211531001403392.0000000000\\
29.0000000000	12913308195162056704.0000000000\\
31.0000000000	33284309796770238464.0000000000\\
};

\addplot 
  table[row sep=crcr]{%
1.0000000000	14262.7996490092\\
3.0000000000	28667.9309565070\\
5.0000000000	140546.8126874399\\
7.0000000000	2665815.1673817136\\
9.0000000000	95884635.4824651033\\
11.0000000000	3735046632.4657125473\\
13.0000000000	144702666365.3332824707\\
15.0000000000	5530182159431.6201171875\\
17.0000000000	210193042624900.1875000000\\
19.0000000000	8299307144997054.0000000000\\
21.0000000000	346210667448869312.0000000000\\
23.0000000000	513437307213550144.0000000000\\
25.0000000000	1246982959321596928.0000000000\\
27.0000000000	1869426852974389760.0000000000\\
29.0000000000	8222254279673583616.0000000000\\
31.0000000000	6002601629170775040.0000000000\\
};

\addplot +[mark options={solid}]
  table[row sep=crcr]{%
1.0000000000	27693.8269522933\\
3.0000000000	54198.6525930242\\
5.0000000000	217189.2487661901\\
7.0000000000	2911833.5830472191\\
9.0000000000	94187995.5316184610\\
11.0000000000	3568838122.6861977577\\
13.0000000000	136123179074.5238494873\\
15.0000000000	5143595406258.0576171875\\
17.0000000000	192340322169935.4687500000\\
19.0000000000	7113712625634178.0000000000\\
21.0000000000	786099251103312768.0000000000\\
23.0000000000	230509051515236096.0000000000\\
25.0000000000	1068147662895629568.0000000000\\
27.0000000000	967967955555135744.0000000000\\
29.0000000000	2739448825642392064.0000000000\\
31.0000000000	29129830690413338624.0000000000\\
};

\addplot 
  table[row sep=crcr]{%
1.0000000000	56128.0160683956\\
3.0000000000	108117.2004595782\\
5.0000000000	379088.5515663907\\
7.0000000000	3486471.0461534043\\
9.0000000000	95003993.0293032825\\
11.0000000000	3477798113.4906415939\\
13.0000000000	130698987157.5382232666\\
15.0000000000	4883501744291.1845703125\\
17.0000000000	180988435082132.8125000000\\
19.0000000000	6722721113606098.0000000000\\
21.0000000000	172044774249227456.0000000000\\
23.0000000000	2821137698946969088.0000000000\\
25.0000000000	1554960917187383040.0000000000\\
27.0000000000	463317788148246144.0000000000\\
29.0000000000	66474870347964186624.0000000000\\
31.0000000000	1113647525663338112.0000000000\\
};
\end{axis}

\begin{axis}[%
width=0.4\figurewidth,
height=0.3\figurewidth,
at={(0.5\figurewidth,0\figureheight)},
scale only axis,
xmin=1.0000000000,
xmax=31.0000000000,
xlabel style={font=\color{white!15!black}},
xlabel={$n_\Gamma$},
ymode=log,
ymin=1000.0000000000,
ymax= 500000000000.0,
title={Orthonormal basis functions},
xminorticks=true,
xmajorgrids,
xminorgrids,
yminorticks=true,
ymajorgrids,
yminorgrids,
colorbrewer cycle list=Paired,
ylabel style={font=\color{white!15!black}},
axis background/.style={fill=white},
legend style={at={(1.03,1)},anchor=north west,legend cell align=left,align=left,draw=white!15!black},
yminorgrids=false,xminorgrids=false,xminorticks=false,yminorticks=false,grid style={very thin,gray!13},xtick={1,7,13,19,25,31}
]

\addplot 
  table[row sep=crcr]{%
1.0000000000	3689.6734970349\\
3.0000000000	3442.6161349874\\
5.0000000000	3435.6146890641\\
7.0000000000	3485.8036748755\\
9.0000000000	3572.9990663499\\
11.0000000000	3702.5655612852\\
13.0000000000	3875.1557817942\\
15.0000000000	4164.5562741114\\
17.0000000000	10458.5187150225\\
19.0000000000	38768.9101564362\\
21.0000000000	137094.1871607430\\
23.0000000000	870077.6543068386\\
25.0000000000	5784897.0889125848\\
27.0000000000	70234145.2464837432\\
29.0000000000	1163623607.5159008503\\
31.0000000000	28658194652.9319000244\\
};
\addlegendentry{$h = 1/20$}

\addplot 
  table[row sep=crcr]{%
1.0000000000	7090.1825416079\\
3.0000000000	6575.9208195863\\
5.0000000000	6520.3736351195\\
7.0000000000	6554.5735662290\\
9.0000000000	6621.5446649959\\
11.0000000000	6721.1111487348\\
13.0000000000	6858.6701921750\\
15.0000000000	7034.7880841457\\
17.0000000000	7265.7898224120\\
19.0000000000	7696.8552911952\\
21.0000000000	18603.9634699546\\
23.0000000000	56017.1807327024\\
25.0000000000	163961.0369104618\\
27.0000000000	586348.6679602657\\
29.0000000000	3259349.3954603253\\
31.0000000000	16351737.3424132708\\
};
\addlegendentry{$h = 1/28$}

\addplot 
  table[row sep=crcr]{%
1.0000000000	14262.7996490092\\
3.0000000000	13180.8857163718\\
5.0000000000	13025.0202167756\\
7.0000000000	13036.5894169097\\
9.0000000000	13088.0445637238\\
11.0000000000	13164.6068950857\\
13.0000000000	13269.5268083299\\
15.0000000000	13407.8306768638\\
17.0000000000	13580.9357779188\\
19.0000000000	13791.4036176265\\
21.0000000000	14066.2435450175\\
23.0000000000	14515.6015025347\\
25.0000000000	21312.2823644256\\
27.0000000000	51990.8183627827\\
29.0000000000	131165.8879543557\\
31.0000000000	322540.8873021581\\
};
\addlegendentry{$h = 1/40$}

\addplot
  table[row sep=crcr]{%
1.0000000000	27693.8269522933\\
3.0000000000	25543.4015467135\\
5.0000000000	25199.9719436426\\
7.0000000000	25174.4039041313\\
9.0000000000	25211.2931967059\\
11.0000000000	25273.1138193935\\
13.0000000000	25356.1309062111\\
15.0000000000	25463.7892027951\\
17.0000000000	25600.5678420501\\
19.0000000000	25768.8739088760\\
21.0000000000	25969.0357468576\\
23.0000000000	26205.9901535484\\
25.0000000000	26505.4118535947\\
27.0000000000	26947.6410826392\\
29.0000000000	27757.2116602747\\
31.0000000000	43468.9437488515\\
};
\addlegendentry{$h = 1/56$}

\addplot
  table[row sep=crcr]{%
1.0000000000	56128.0160683957\\
3.0000000000	51706.9577147704\\
5.0000000000	50965.5687039411\\
7.0000000000	50863.9977122355\\
9.0000000000	50878.1761811366\\
11.0000000000	50926.5129021124\\
13.0000000000	50994.0829697185\\
15.0000000000	51079.5241697314\\
17.0000000000	51185.4543563998\\
19.0000000000	51315.6076997594\\
21.0000000000	51473.1048466165\\
23.0000000000	51659.3354098637\\
25.0000000000	51874.5697865772\\
27.0000000000	52121.7360688892\\
29.0000000000	52414.1808329473\\
31.0000000000	52789.5387668249\\
};
\addlegendentry{$h = 1/80$}

\end{axis}
\end{tikzpicture}%
\caption{Condition number of the discretized matrix in Eq.~\eqref{eq:algebraic} for the Poisson problem on two subdomains (see Section~\ref{subsec:poisson}) with conforming meshes vs number of basis functions for the Lagrange multiplier space, with different refinement levels of the (conforming) meshes. On the left, we consider non-orthonormal ``half"  Fourier basis functions \eqref{eq:fourier_b}, while on the right we consider their orthonormalization.}
\label{fig:condition}
\end{figure}
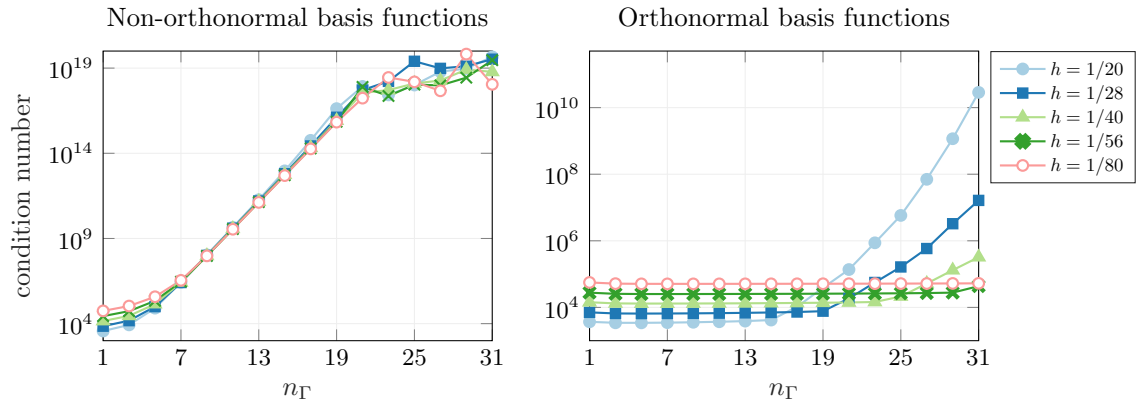

In order to retain the convergence order attained by using the Fourier modes in Eq.~\eqref{eq:fourier_b} and, at the same time, control the condition number of the system, we propose an orthonormalization strategy based on the Gram-Schmidt algorithm or, equivalently, on the QR decomposition \cite{ruhe1983numerical}. Even though the coefficients of the orthonormal basis generated by \eqref{eq:fourier_b} with these algorithms could be analytically derived, their exact expression quickly becomes complex with $n_\Gamma$ becoming large. With our approach, we aim at obtaining an approximation of such coefficients relying on a fine sampling of the basis functions on the interval $(0,L)$. We remark that, in addition to allowing to effortlessly compute a large number of orthonormal basis functions, our approach has the advantage to be general enough to be applied to any set of non-orthonormal basis functions.

Let $\{\xi_i\}_{i = 1}^{n_\Gamma}$ be the set of non-orthonormal basis functions defined on $\Gamma$. Moreover, let $\{x_i\}_{i = 1}^{n_\text{s}}$ be distinct sample points distributed over the interval $(0,L)$, where $L$ still denotes the length of the interface. We now introduce the functions $\{\kappa_i\}_{i = 1}^{n_\text{s}}$, which we identify with the set of standard Lagrangian piecewise linear basis functions centered on each sample point $x_i$, and the associated mass matrix $M_{ij} = \int_{\Gamma} \kappa_i \kappa_j\,\text{d}\textbf x$. Let $V = [\mathbf v_1,\, \mathbf v_2,\,\ldots, \mathbf v_{n_\Gamma}] \in \mathbb{R}^{n_{\text{s}} \times n_\Gamma}$ be the matrix of the evaluations of the basis functions on the sample points, namely $V_{ij} = \xi_j(x_i)$. We remark that, for each $i = 1,2,\ldots,n_\Gamma$, we have $\Vert \xi_i \Vert_{L^2(0,L)}^2 \approx \textbf v_i^T M \textbf v_i$. Since $M$ is a positive-definite matrix, it admits a unique Cholesky decomposition and there exists $C \in \mathbb{R}^{n_\text{s} \times n_\text{s}}$ such that $C^T C = M$.
Let us now consider the unit matrix $Q \in \mathbb{R}^{n_\text{s} \times n_\Gamma}$ and the upper triangular matrix $R \in \mathbb{R}^{n_\Gamma \times n_\Gamma}$ such that the truncated QR decomposition of $CV$ reads
\begin{equation}
C V = Q R.
\end{equation}
By construction, we have
\begin{equation}
(C^{-1} Q)^T M C^{-1} Q = Q^T C^{-T} C^{T} C C^{-1} Q = I,
\end{equation}
thus, the columns of $C^{-1}Q$ represent evaluations at the sample points of functions orthonormal on $(0,L)$ with respect to the $L^2$ product. The matrix $R$ performs the change of variable from the frame of reference of the new orthonormal basis functions to the frame of reference of the non-orthonormal basis functions. If the sampling is sufficiently fine, we speculate that the elements of the matrix $R^{-1}$ well approximate the coefficients which are computed by applying the Gram-Schmidt algorithm to the continuous non-orthonormal basis functions $\{\xi_i\}_{i = 1}^{n_\Gamma}$ and, in particular, that
\begin{equation}
\xi_i^\text{GS} = \sum_{j = 1}^{n_\Gamma} \xi_j R^{-1}_{ji}.
\label{eq:fourier_gs}
\end{equation}

\begin{figure}
\centering
\setlength
\figureheight{0.65\textwidth}
\setlength
\figurewidth{0.8\textwidth}
\input{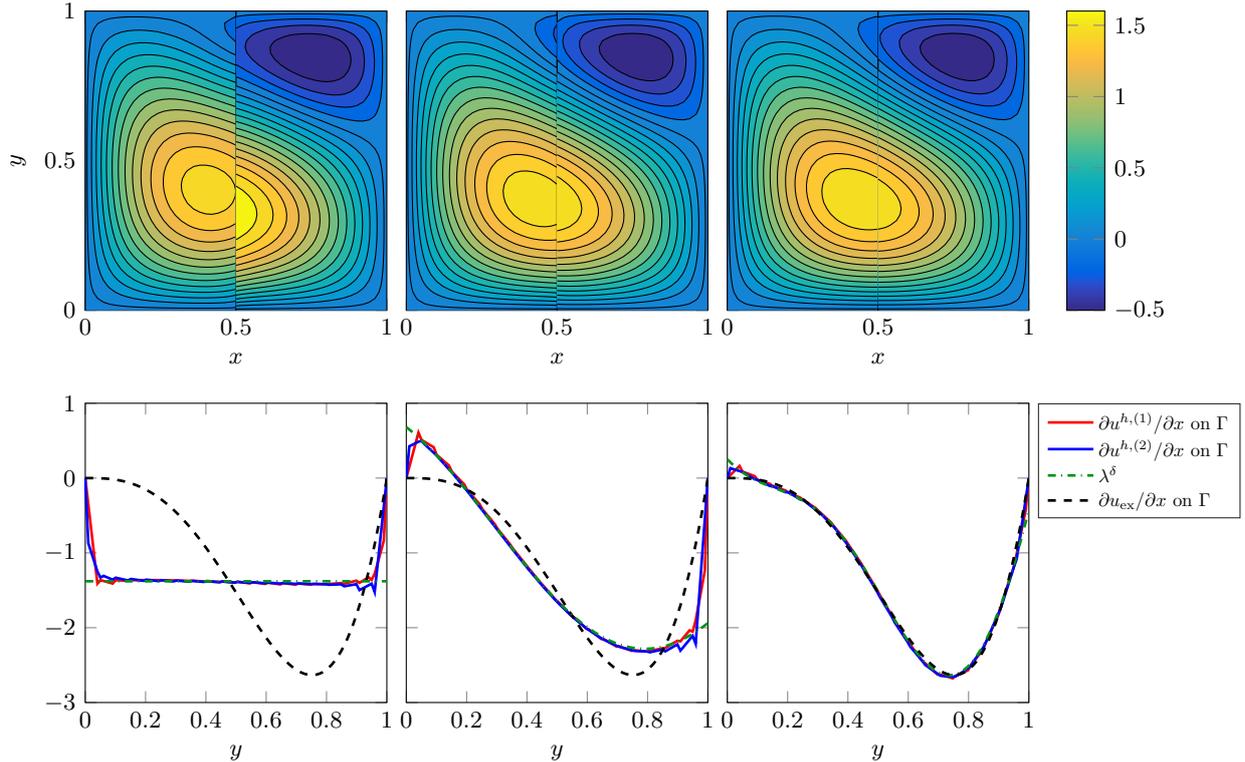}
\caption{Contour lines of the solution (top row) and derivative of the solution at the interface $\Gamma$ (bottom row) when $n_\Gamma = 1$ (left column), $n_\Gamma = 3$ Fourier modes (middle column) and $n_\Gamma = 5$ Fourier modes (right column) are used to characterize the space $\Lambda^\delta$. The red and blue solid lines in the bottom plots represent the partial derivatives with respect to $x$ -- computed at the interface $\Gamma$ -- of the numerical solutions in $\Omega_1$ and $\Omega_2$ respectively. The dash-dot green line is computed by reconstructing $\lambda^\delta$ as a linear combination of the Fourier basis functions, i.e. $\lambda^\delta = \sum_{i = 1}^{n_\Gamma} \lambda_{i} \xi_{i}$. The results are obtained on a mesh conforming at the interface, with quadratic Lagrangian polynomials on both domains and with mesh size $h = 1/20$.}
\label{fig:convergence_qual}
\end{figure}

From a practical perspective, the matrix $R^{-1}$ is suitable to compute the coupling matrix $B^{\text{GS}}$ with respect to the orthonormal Fourier basis functions, knowing the coupling matrix computed without orthonormalization $B$. Indeed, we have
\begin{equation}
 B^{\text{GS}}_{ij} = \int_{\Gamma} \xi_i^{\text{GS}} \varphi_j\,\text{d}\textbf s = \sum_{k = 1}^{n_\Gamma} \left( \int_{\Gamma} \xi_k \varphi_j\,\text{d}\textbf s \right) R^{-1}_{ki} = \sum_{k = 1}^{n_\Gamma} B_{kj} R^{-1}_{ki},
\end{equation}
or equivalently $B^{\text{GS}}_{ij} = R^{-T} B$. Therefore, the condition number of the system can be controlled by multiplying the coupling matrices $B$ by the matrix $R^{-1}$; observe that, being $R$ an upper triangular matrix, the application of its inverse is performed with negligible cost. The matrix $R^{-1}$ depends only on the choice of the non-orthonormal basis functions and can be then computed a priori. We remark that, with this approach, the orthonormal basis functions are never explicitly computed. Moreover, since the discrete space is exactly the same, the approximation properties and the convergence orders are not changed. Fig.~\ref{fig:condition} (right) shows that, after the orthonormalization of the Fourier basis functions \eqref{eq:fourier_b} by the algorithm we presented, the system is more stable and the condition number increases with the number of Fourier basis functions $n_\Gamma$ dependently on the refinement level of the mesh $h$.

\subsection{The Poisson problem}
\label{subsec:poisson}

Let us consider the global Poisson problem \eqref{eq:poisson} on the domain $\Omega = (0,1)^2$, where we take $f$ such that $u_{\text{ex}} = 100 x y(1-x)(1-y)\sin(1/3-xy^2)$ is the exact solution. We divide $\Omega$ into $\Omega_1 = (0,0.5) \times (0, 1)$ and $\Omega_2 = (0.5,1) \times (0, 1)$.

We numerically solve the problem on $\Omega_1$ and $\Omega_2$ by employing structured triangular conforming and non-conforming meshes with varying mesh size $h$. The conforming meshes are obtained by subdividing the domain in the $x$- and $y$-direction in the same number of elements. On the other hand, the non-conforming meshes are built by taking in the $y$-direction of $\Omega_2$ $N+1$ elements, $N$ being the number of elements in the $y$-direction in $\Omega_1$ as well as the total number of elements in the $x$-direction.

Fig.~\ref{fig:convergence_qual} shows how the solutions on $\Omega_1$ and $\Omega_2$ obtained with a conforming mesh with $N = 20$ elements in each direction change with respect to the number of basis functions on the interface. The results are obtained with quadratic Lagrangian polynomials in both subdomains. From the contour lines plots in the top row, it appears that the two solutions match quite accurately at the interface with 5 Fourier basis functions ($n_\omega = 2$). In the second row of Fig.~\ref{fig:convergence_qual}, we plot the approximation by finite differences of the derivative of the solution with respect to $x$ in the two domains, which is equal to the normal derivative of $u^{h,(1)}$  and to the opposite of the normal derivative on $u^{h,(2)}$ on $\Gamma$ respectively. Observe that, as we already highlighted in Remark~\ref{remark:derivative}, the Lagrange multiplier $\lambda^\delta$ takes the role of the normal derivative of $u^h$ on $\Gamma$.

Let us address the convergence of the global solution to the exact one with respect both to the mesh size $h$ and the number of basis functions on the interface $n_\Gamma$. To this end, we consider meshes with total number of elements in the $x$-direction $N = 20,28,40,56,80,114,160$ and we solve the problems by employing quadratic Lagrangian basis functions in both subdomains. Fig.~\ref{fig:convergence_quant} (top row) depicts the decaying of the error in $\mathcal X$-norm (the broken norm) with respect to $h$, as well as the convergence of the error obtained by solving the problem on a single mesh of $\Omega$ (in black dashed line). When employing both conforming and non-conforming meshes, the error is optimal -- in the sense that we recover the theoretical order of convergence $h^2$ of quadratic finite elements for the $H^1$-error -- when $n_\Gamma$ is large enough, e.g. $n_\Gamma \geq 13$. If $n_\Gamma$ is too small, on the contrary, the solution is unable to converge to the exact solution with $h$ and reaches a stagnation point. We remark that this result is perfectly consistent with Strang's second lemma \eqref{eq:strang} and with the stability result in Theorem \ref{theorem:stability}: whenever the space of Lagrange multiplier is rich enough (which is equivalent to requiring that $\mathcal V^{h,\delta}$ be a good approximation of $\mathcal V$), the best approximation error of the interpolation is recovered.
\begin{figure}
\centering
\setlength
\figureheight{0.7\textwidth}
\setlength
\figurewidth{0.8\textwidth}
\input{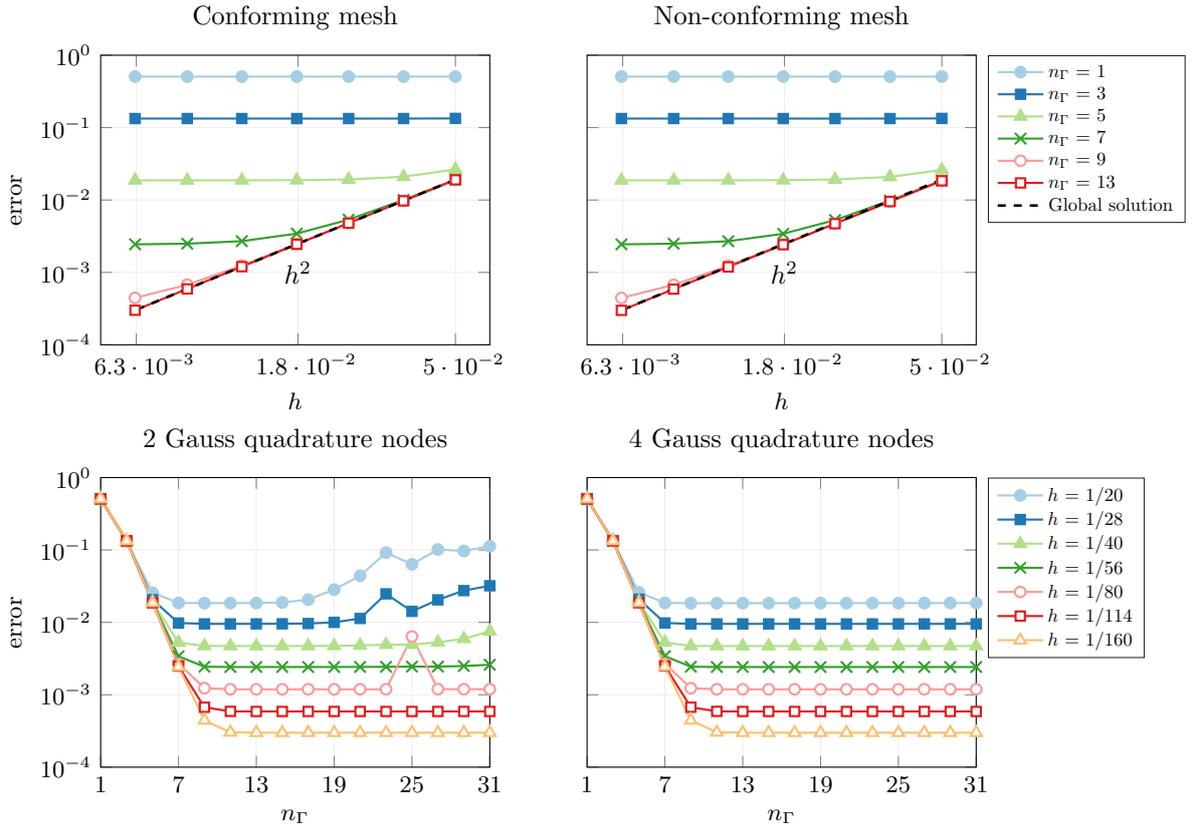}
\caption{The top row shows the convergence of global error with respect to the mesh size $h$ and number of basis functions on the interface $n_\Gamma$ with conforming and non-conforming meshes. The colored lines show the errors in the $\mathcal X$-norm, i.e. the broken norm. The black dashed line represents the $H^1$-error obtained by solving the global problem on a conforming mesh, which can be regarded as the union of two conforming meshes on $\Omega_1$ and $\Omega_2$. The bottom row shows the convergence of the error on the non-conforming meshes with respect to $n_\Gamma$ with varying mesh sizes; in particular, the discretization of the terms corresponding to the bilinear form $b(\cdot,\cdot)$ in Eq.~\eqref{eq:apprB} has been performed with 2 and 4 Gauss quadrature nodes on the left and on the right plots respectively. Note that the two plots at the right both refer to the same data obtained with a 4-node quadrature rule.}
\label{fig:convergence_quant}
\end{figure}
\begin{remark}
In our numerical simulations with non-conforming meshes, we observed that instabilities arise when using coarse meshes and low-order quadrature rules for the computations of the approximate integrals of $B_1$ and $B_2$ in Eq.~\eqref{eq:apprB}. Fig.~\ref{fig:convergence_quant} (bottom row, left) shows that, when using for example 2 Gauss quadrature nodes, the error increases with $n_\Gamma$ when $h = 1/20, 1/28,1/40$. By increasing the order of the quadrature rule and choosing 4 Gauss quadrature nodes this issue is completely fixed; see Fig.~\ref{fig:convergence_quant} (bottom row, right). The plots in Fig.~\ref{fig:convergence_quant} (right column) are obtained from the same data. We did not encounter stability problems when using conforming meshes, even with low-order quadrature rules.
\end{remark}

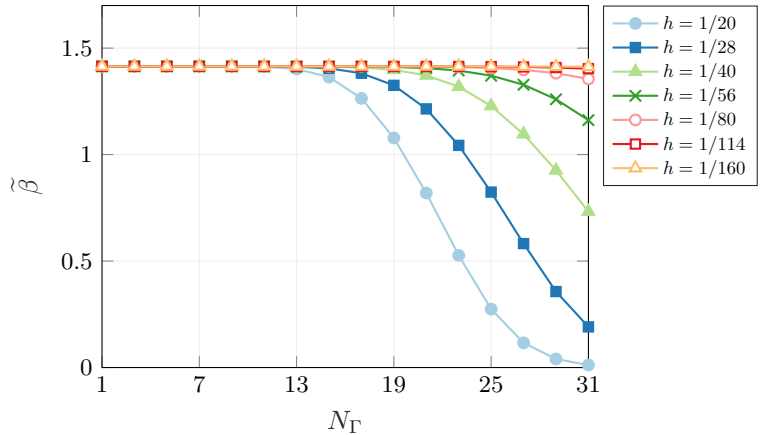
\begin{figure}
\centering
\setlength
\figureheight{0.4\textwidth}
\setlength
\figurewidth{0.4\textwidth}
% This file was created by matlab2tikz.
%
%The latest updates can be retrieved from
%  http://www.mathworks.com/matlabcentral/fileexchange/22022-matlab2tikz-matlab2tikz
%where you can also make suggestions and rate matlab2tikz.
%
\definecolor{mycolor1}{rgb}{0.00000,0.75000,0.75000}%
\definecolor{mycolor2}{rgb}{0.75000,0.00000,0.75000}%
\definecolor{mycolor3}{rgb}{0.75000,0.75000,0.00000}%
\begin{tikzpicture}

\begin{axis}[%
width=\figurewidth,
height=0.75\figurewidth,
at={(0\figurewidth,0\figureheight)},
scale only axis,
xmin=1.0000000000,
xmax=31.0000000000,
xlabel style={font=\color{white!15!black}},
xlabel={$N_\Gamma$},
ylabel={$\widetilde \beta$},
%ymode=log,
ymin=0.00,
ymax=1.7000000000,
xminorticks=true,
xmajorgrids,
xminorgrids,
yminorticks=true,
ymajorgrids,
yminorgrids,
colorbrewer cycle list=Paired,
ylabel style={font=\color{white!15!black}},
axis background/.style={fill=white},
legend style={at={(1.03,1)},anchor=north west,legend cell align=left,align=left,draw=white!15!black},
yminorgrids=false,xminorgrids=false,xminorticks=false,yminorticks=false,grid style={very thin,gray!13},xtick={1,7,13,19,25,31}
]
\addplot
  table[row sep=crcr]{%
1.0000000000	1.4142135624\\
3.0000000000	1.4142135545\\
5.0000000000	1.4142134453\\
7.0000000000	1.4142043526\\
9.0000000000	1.4140144069\\
11.0000000000	1.4122276159\\
13.0000000000	1.4021342340\\
15.0000000000	1.3640221469\\
17.0000000000	1.2640898515\\
19.0000000000	1.0782057793\\
21.0000000000	0.8194847817\\
23.0000000000	0.5267160222\\
25.0000000000	0.2743625832\\
27.0000000000	0.1159605435\\
29.0000000000	0.0405003572\\
31.0000000000	0.0116887371\\
};
\addlegendentry{$h = 1/20$}

\addplot
  table[row sep=crcr]{%
1.0000000000	1.4142135624\\
3.0000000000	1.4142135548\\
5.0000000000	1.4142134829\\
7.0000000000	1.4142122579\\
9.0000000000	1.4141839540\\
11.0000000000	1.4139050561\\
13.0000000000	1.4122197787\\
15.0000000000	1.4050384081\\
17.0000000000	1.3820490282\\
19.0000000000	1.3251551135\\
21.0000000000	1.2146427117\\
23.0000000000	1.0432775033\\
25.0000000000	0.8238446547\\
27.0000000000	0.5820079837\\
29.0000000000	0.3568611332\\
31.0000000000	0.1909038641\\
};
\addlegendentry{$h = 1/28$}

\addplot 
  table[row sep=crcr]{%
1.0000000000	1.4142135624\\
3.0000000000	1.4142135548\\
5.0000000000	1.4142134863\\
7.0000000000	1.4142131701\\
9.0000000000	1.4142096182\\
11.0000000000	1.4141721433\\
13.0000000000	1.4139349200\\
15.0000000000	1.4128565444\\
17.0000000000	1.4090391097\\
19.0000000000	1.3980581188\\
21.0000000000	1.3717991246\\
23.0000000000	1.3189259131\\
25.0000000000	1.2284802968\\
27.0000000000	1.0957269206\\
29.0000000000	0.9258707118\\
31.0000000000	0.7322027435\\
};
\addlegendentry{$h = 1/40$}

\addplot  +[mark options={solid}]
  table[row sep=crcr]{%
1.0000000000	1.4142135624\\
3.0000000000	1.4142135548\\
5.0000000000	1.4142134867\\
7.0000000000	1.4142132312\\
9.0000000000	1.4142123743\\
11.0000000000	1.4142069148\\
13.0000000000	1.4141710402\\
15.0000000000	1.4140015550\\
17.0000000000	1.4133714982\\
19.0000000000	1.4114280896\\
21.0000000000	1.4063021233\\
23.0000000000	1.3945294049\\
25.0000000000	1.3707243650\\
27.0000000000	1.3280383105\\
29.0000000000	1.2597807882\\
31.0000000000	1.1618898978\\
};
\addlegendentry{$h = 1/56$}

\addplot 
  table[row sep=crcr]{%
1.0000000000	1.4142135624\\
3.0000000000	1.4142135548\\
5.0000000000	1.4142134868\\
7.0000000000	1.4142132386\\
9.0000000000	1.4142126213\\
11.0000000000	1.4142111120\\
13.0000000000	1.4142059345\\
15.0000000000	1.4141830357\\
17.0000000000	1.4140949779\\
19.0000000000	1.4138124891\\
21.0000000000	1.4130275093\\
23.0000000000	1.4110935941\\
25.0000000000	1.4068065895\\
27.0000000000	1.3981716817\\
29.0000000000	1.3822631106\\
31.0000000000	1.3553269845\\
};
\addlegendentry{$h = 1/80$}

\addplot 
  table[row sep=crcr]{%
1.0000000000	1.4142135624\\
3.0000000000	1.4142135548\\
5.0000000000	1.4142134868\\
7.0000000000	1.4142132394\\
9.0000000000	1.4142126471\\
11.0000000000	1.4142114632\\
13.0000000000	1.4142091939\\
15.0000000000	1.4142043303\\
17.0000000000	1.4141912356\\
19.0000000000	1.4141513746\\
21.0000000000	1.4140379058\\
23.0000000000	1.4137485152\\
25.0000000000	1.4130779128\\
27.0000000000	1.4116477552\\
29.0000000000	1.4088159059\\
31.0000000000	1.4035761984\\
};
\addlegendentry{$h = 1/114$}

\addplot 
  table[row sep=crcr]{%
1.0000000000	1.4142135624\\
3.0000000000	1.4142135548\\
5.0000000000	1.4142134868\\
7.0000000000	1.4142132395\\
9.0000000000	1.4142126502\\
11.0000000000	1.4142115024\\
13.0000000000	1.4142095061\\
15.0000000000	1.4142062111\\
17.0000000000	1.4142007246\\
19.0000000000	1.4141908436\\
21.0000000000	1.4141705652\\
23.0000000000	1.4141242837\\
25.0000000000	1.4140175046\\
27.0000000000	1.4137840639\\
29.0000000000	1.4133053467\\
31.0000000000	1.4123763454\\
};
\addlegendentry{$h = 1/160$}

\end{axis}
\end{tikzpicture}%
\caption{Decaying of the inf-sup constant $\widetilde \beta$ with respect to $n_\Gamma$ computed on the Poisson problem on conforming meshes, using quadratic polynomial basis functions on both subdomains and the orthonormal basis functions $\xi_i^{\text{GS}}$ on the interface. The constant is approximated as the square root of the minimum eigenvalue of Eq.~\eqref{eq:generalized_2}.}
\label{fig:infsup}
\end{figure}

Fig.~\ref{fig:infsup} shows the variation of the estimate of the inf-sup constant $\widetilde \beta$ -- computed as the square root of the minimum eigenvalue of the generalized eigenvalue problem \eqref{eq:generalized}, as described in Section~\ref{subsec:infsup} -- when the number of basis functions on the interface changes; the estimate refers to the simulation of the Poisson equations with conforming meshes and quadratic polynomial basis functions. Due to the difficulties in computing the $H^{-1/2}_{00}$- norm for the Lagrange multiplier, we replaced the estimate given by Eq.~\eqref{eq:generalized} with a surrogate where the space $\mathcal X^h$ is substituted with the space spanned by the traces on $\Gamma$ of the finite element basis functions $\varphi_i$; the $L^2$-norm is used both for such space and $\Lambda^\delta$. For the result in Fig.~\ref{fig:infsup}, we employed the orthonormal basis functions $\xi_i^{\text{GS}}$ computed as in Eq.~\eqref{eq:fourier_gs}, so that $X_\Lambda = I$; therefore, from Eq.~\eqref{eq:generalized_2} it follows that $\widetilde \beta$ is simply found as the square root of the minimum eigenvalue of $B^{\text{GS}} X_\mathcal{X}^{-1} (B^{\text{GS}})^T$. In Fig.~\ref{fig:infsup}, each curve presents a plateau phase in which the inf-sup constant stays approximately constant at around $\widetilde \beta \approx 1.41$ with the increment of $n_\Gamma$. The amplitude of such plateau phase increases when $h$ becomes smaller. Indeed, we observe that $\widetilde \beta$ starts decreasing for smaller values of $n_\Gamma$ when the meshes are coarser and that, conversely, for finer meshes the inf-sup constant varies relatively little in the range $n_\Gamma \in (1,31)$. We remark that, combined with the condition number shown in Fig.~\ref{fig:condition}~(right), this result ensures that for each refinement level, we are able to obtain the optimal convergence of the finite element method when the basis functions are orthonormal. Indeed, refining the mesh has the effect of both increasing the range of stability of the linear system -- see Fig.~\ref{fig:condition} (right) -- and increasing the number of basis functions at the interface that can be employed without reaching the fast decaying region of $\widetilde \beta$ in Fig.~\ref{fig:infsup}. With regard to this last point, we recall that it is important to prevent the inf-sup constant to become too small because it appears at the denominator of the constant multiplying the best approximation errors on $u$ and on $\lambda$ in the error estimates of Theorem~\ref{theorem:stability}.

We focus now on the solution of the problem when employing non-conforming meshes, linear Lagrangian basis functions in $\Omega_1$ and quadratic Lagrangian basis functions in $\Omega_2$. Fig.~\ref{fig:convergence_quant_p1p2} shows that the $H^1$-error scales in the two subdomains as the best approximation error of the local (to the subdomain) basis: we recover first order convergence in $\Omega_1$ and second order convergence in $\Omega_2$. We remark that the convergence of the global error in the broken norm is determined by the rate in $\Omega_1$ --  being the error in such subdomain much larger than that in $\Omega_2$ -- and it is of first order. In Fig.~\ref{fig:convergence_quant_p1p2} we also show with black dashed lines the global $H^1$ error obtained when solving the problem with linear (in the left plot) and quadratic (in the right plot) basis functions on the whole $\Omega$. As expected, the accuracy obtained with mixed polynomial degrees lies between the accuracies achieved while using only linear and only quadratic basis functions.

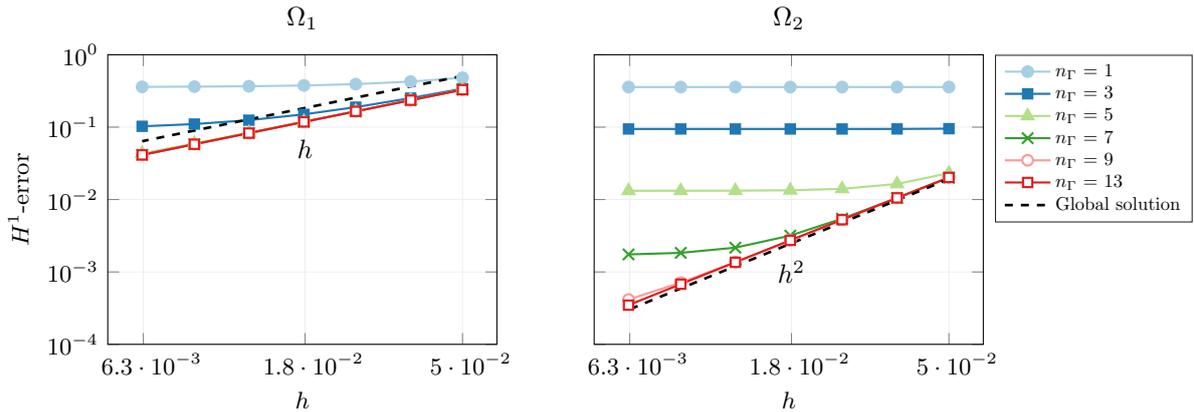
\begin{figure}
\centering
\setlength
\figureheight{0.7\textwidth}
\setlength
\figurewidth{0.8\textwidth}
% This file was created by matlab2tikz.
%
%The latest updates can be retrieved from
%  http://www.mathworks.com/matlabcentral/fileexchange/22022-matlab2tikz-matlab2tikz
%where you can also make suggestions and rate matlab2tikz.
%
\definecolor{mycolor1}{rgb}{0.00000,0.57650,0.09020}%
\definecolor{mycolor2}{rgb}{0.20000,0.60000,0.90000}%
\definecolor{mycolor3}{rgb}{0.75000,0.25000,0.80000}%
\definecolor{mycolor4}{rgb}{0.75000,0.75000,0.00000}%
\definecolor{mycolor5}{rgb}{0.90000,0.50000,0.90000}%
\definecolor{mycolor6}{rgb}{0.00000,0.57647,0.09020}%
\begin{tikzpicture}

\begin{axis}[%
title={$\Omega_1$},
width=0.4\figurewidth,
height=0.3\figurewidth,
at={(0\figurewidth,0\figureheight)},
scale only axis,
xmode=log,
xmin=0.0050000000,
xmax=0.0625000000,
xminorticks=true,
xlabel={$h$},
xmajorgrids,
xminorgrids,
ymode=log,
ymin=0.0001000000,
ymax=1.0000000000,
yminorticks=true,
ylabel={$H^1$-error},
ymajorgrids,
yminorgrids,
axis background/.style={fill=white},
colorbrewer cycle list=Paired,
yminorgrids=false,xminorgrids=false,xminorticks=false,yminorticks=false,grid style={very thin,gray!13},xtick={0.0063,0.018,0.05},xticklabel style={/pgf/number format/sci},xticklabel={ \pgfmathfloatparsenumber{\tick} \pgfmathfloatexp{\pgfmathresult} \pgfmathprintnumber{\pgfmathresult} }
]
\addplot
  table[row sep=crcr]{%
0.0500000000	0.4815022105\\
0.0357142857	0.4258477303\\
0.0250000000	0.3927621169\\
0.0178571429	0.3761153077\\
0.0125000000	0.3669565679\\
0.0087719298	0.3624118460\\
0.0062500000	0.3602259165\\
};

\addplot
  table[row sep=crcr]{%
0.0500000000	0.3394355442\\
0.0357142857	0.2518777389\\
0.0250000000	0.1888768494\\
0.0178571429	0.1501702817\\
0.0125000000	0.1247914400\\
0.0087719298	0.1103363337\\
0.0062500000	0.1027354644\\
};

\addplot 
  table[row sep=crcr]{%
0.0500000000	0.3288285632\\
0.0357142857	0.2358281587\\
0.0250000000	0.1656517814\\
0.0178571429	0.1187896100\\
0.0125000000	0.0837262456\\
0.0087719298	0.0595173528\\
0.0062500000	0.0434113479\\
};

\addplot +[mark options={solid}]
  table[row sep=crcr]{%
0.0500000000	0.3288834708\\
0.0357142857	0.2356633720\\
0.0250000000	0.1652581431\\
0.0178571429	0.1181483592\\
0.0125000000	0.0827514959\\
0.0087719298	0.0580976570\\
0.0062500000	0.0414168122\\
};

\addplot 
  table[row sep=crcr]{%
0.0500000000	0.3289487989\\
0.0357142857	0.2356910351\\
0.0250000000	0.1652667791\\
0.0178571429	0.1181460543\\
0.0125000000	0.0827397176\\
0.0087719298	0.0580761399\\
0.0062500000	0.0413840160\\
};

\addplot 
  table[row sep=crcr]{%
0.0500000000	0.3290171505\\
0.0357142857	0.2357156598\\
0.0250000000	0.1652744151\\
0.0178571429	0.1181485429\\
0.0125000000	0.0827404380\\
0.0087719298	0.0580761790\\
0.0062500000	0.0413836463\\
};

\addplot [color=black, dashed, line width=1.0pt]
  table[row sep=crcr]{%
0.0500000000	0.5093574257\\
0.0357142857	0.3646181571\\
0.0250000000	0.2555284555\\
0.0178571429	0.1826199853\\
0.0125000000	0.1278710966\\
0.0087719298	0.0897468091\\
0.0062500000	0.0639489276\\
};

\node at (axis cs: 0.018, 5e-2) {$h$};

\end{axis}

\begin{axis}[%
title={$\Omega_2$},
width=0.4\figurewidth,
height=0.3\figurewidth,
at={(0.5\figurewidth,0\figureheight)},
scale only axis,
xmode=log,
xmin=0.0050000000,
xmax=0.0625000000,
xminorticks=true,
xlabel={$h$},
xmajorgrids,
xminorgrids,
ymode=log,
yticklabels={,,},
ymin=0.000100,
ymax=1.0000000000,
yminorticks=true,
ymajorgrids,
yminorgrids,
colorbrewer cycle list=Paired,
axis background/.style={fill=white},
legend style={at={(1.03,1)},anchor=north west,legend cell align=left,align=left,draw=white!15!black},
yminorgrids=false,xminorgrids=false,xminorticks=false,yminorticks=false,grid style={very thin,gray!13},xtick={0.0063,0.018,0.05},xticklabel style={/pgf/number format/sci},xticklabel={ \pgfmathfloatparsenumber{\tick} \pgfmathfloatexp{\pgfmathresult} \pgfmathprintnumber{\pgfmathresult} }
]
\addplot 
  table[row sep=crcr]{%
0.0500000000	0.3581895599\\
0.0357142857	0.3580006348\\
0.0250000000	0.3579651847\\
0.0178571429	0.3579644284\\
0.0125000000	0.3579682405\\
0.0087719298	0.3579711734\\
0.0062500000	0.3579728255\\
};
\addlegendentry{$n_\Gamma$ = 1}

\addplot 
  table[row sep=crcr]{%
0.0500000000	0.0952497518\\
0.0357142857	0.0944294167\\
0.0250000000	0.0942788690\\
0.0178571429	0.0942785514\\
0.0125000000	0.0942969958\\
0.0087719298	0.0943107078\\
0.0062500000	0.0943183539\\
};
\addlegendentry{$n_\Gamma$ = 3}

\addplot 
  table[row sep=crcr]{%
0.0500000000	0.0231088206\\
0.0357142857	0.0164226693\\
0.0250000000	0.0140477849\\
0.0178571429	0.0134297615\\
0.0125000000	0.0132636352\\
0.0087719298	0.0132265714\\
0.0062500000	0.0132193384\\
};
\addlegendentry{$n_\Gamma$ = 5}

\addplot +[mark options={solid}]
  table[row sep=crcr]{%
0.0500000000	0.0199951811\\
0.0357142857	0.0105210097\\
0.0250000000	0.0054524882\\
0.0178571429	0.0031765192\\
0.0125000000	0.0021653554\\
0.0087719298	0.0018351607\\
0.0062500000	0.0017463284\\
};
\addlegendentry{$n_\Gamma$ = 7}

\addplot
  table[row sep=crcr]{%
0.0500000000	0.0201206083\\
0.0357142857	0.0105025379\\
0.0250000000	0.0052539917\\
0.0178571429	0.0027307944\\
0.0125000000	0.0013720219\\
0.0087719298	0.0007124956\\
0.0062500000	0.0004157747\\
};
\addlegendentry{$n_\Gamma$ = 9}

\addplot 
  table[row sep=crcr]{%
0.0500000000	0.0202014767\\
0.0357142857	0.0105436780\\
0.0250000000	0.0052700899\\
0.0178571429	0.0027322290\\
0.0125000000	0.0013589918\\
0.0087719298	0.0006781740\\
0.0062500000	0.0003481801\\
};
\addlegendentry{$n_\Gamma$ = 13}

\addplot [color=black,dashed,line width=1.0pt]
  table[row sep=crcr]{%
0.0500000000	0.0191686779\\
0.0357142857	0.0097978346\\
0.0250000000	0.0048057008\\
0.0178571429	0.0024530498\\
0.0125000000	0.0012023002\\
0.0087719298	0.0005921579\\
0.0062500000	0.0003006307\\
};
\addlegendentry{Global solution}

\node at (axis cs: 0.018, 1e-3) {$h^2$};

\end{axis}
\end{tikzpicture}%
\caption{Convergence of the $H^1$-error in $\Omega_1$ (left) and $\Omega_2$ (right) when linear and quadratic Lagrangian basis functions are employed in the two domains respectively. The meshes are not conforming at the interface. The black dashed lines refer to the global $H^1$-error obtained with linear (on the left) and quadratic (on the right) elements on the whole $\Omega$.}
\label{fig:convergence_quant_p1p2}
\end{figure}

\subsection{The Navier Stokes equations}
\label{sec:nstokes}

In this section, we test the flexibility of our method by solving the Navier-Stokes equations on $\Omega = (0,1) \times (0,1)$
\begin{equation}
\begin{alignedat}{3}
- \mu \Delta \mathbf u + (\mathbf u\cdot \nabla) \mathbf u + \nabla p &= \mathbf f \quad &&\text{in } \Omega, \\
\text{div} \mathbf u &= 0 \quad &&\text{in } \Omega, \\
\mathbf u &= \mathbf g &&\text{on } \Gamma_D,\\
\sigma(\mathbf u,p) \mathbf n &= \mathbf h &&\text{on } \Gamma_N,\\
\end{alignedat}
\label{eq:ns}
\end{equation}
where $\mathbf u$ and $p$ are velocity and pressure respectively, $\mu \in \mathbb{R}$ is the viscosity, $\Gamma_D$ and $\Gamma_N$ are portions of the boundary such that $\Gamma_D \cap \Gamma_N = \partial \Omega$ and $\Gamma_D \cup \Gamma_N = \emptyset$, $\mathbf f$ is a given forcing term, $\mathbf g$ and $\mathbf h$ are the Dirichlet and Neumann boundary data respectively, and
\begin{equation}
\sigma(\mathbf u,p) = \mu \nabla \mathbf u  - pI
\end{equation}
is the stress tensor. The domain is partitioned into five subdomains $\Omega_i$ with $i = 1,\ldots,5$ and divided by four interfaces $\Gamma_i$ with $i = 1,\ldots,4$, as shown in Fig.~\eqref{fig:ns_convergence} (left). We define a family of non-conforming triangulations $\mathcal T^h$ characterized by the mesh size $h$, i.e. the maximum edge length over $\Omega$ , which corresponds to the mesh size in $\Omega_1$;  $\Omega_3$ and $\Omega_5$ are characterized by approximately the same mesh size $h$, whereas $\Omega_2$ and $\Omega_4$ have mesh size $h/2$. The choice of using meshes refined in the bottom left and bottom right subdomains is motivated by the numerical simulation presented in Section~\ref{subsec:lidcavity}, in which the solution of the Navier-Stokes equations presents recirculation zones localized in these regions of $\Omega$. Differently from what done in Section~\ref{subsec:poisson}, we only focus on non-conforming meshes and we exclusively employ the inf-sup stable Taylor-Hood \cite{hood1974navier} elements with quadratic Lagrangian basis functions for the velocity and linear Lagrangian basis functions for the pressure. The non-linear discretized system is numerically solved by Newton's method.

\begin{remark}
When applied to the Navier-Stokes equations in two dimensions, the method requires assigning to each interface two sets of basis functions discretizing the two components of the normal stress. To see why this is the case, consider the situation in which $\Omega$ is subdivided into $\Omega_1$ and $\Omega_2$; let us denote as always the interface of the two partitions $\Gamma$. Multiplying the momentum equation by a test function $\mathbf v \in [H^1_{\Gamma_D}(\Omega)]^2$ and integrating by parts on $\Omega_1$ leads to
\begin{equation}
\mu  \int_{\Omega_1} \nabla \mathbf u : \nabla \mathbf v \,\text{d}\mathbf x - \int_{\Omega_1} p \nabla \cdot \mathbf v \,\text{d}\mathbf x - \int_{\Gamma} \sigma(\mathbf u,p) \mathbf n \cdot \mathbf v \,\text{d}\mathbf{x} = \int_{\Omega_1} \mathbf f \cdot \mathbf v\,\text{d}\mathbf{x} + \int_{\partial \Omega_1 \cap \Gamma_{N}} \mathbf h \cdot \mathbf v\,\text{d} \mathbf{x}.
\end{equation}
The integral on $\Gamma$ is the coupling term. Each of the two components of the normal stress $\sigma(\mathbf u,p) \mathbf n$ must be discretized by a set of basis functions. In this paper, we choose for simplicity to use the same set for the two components of the normal stress.
\label{remark:bfns}
\end{remark}
We consider again Fourier basis functions for the approximation of the normal stresses. Since, as explained in Remark~\ref{remark:bfns}, we need two Lagrange multipliers for representing each normal stress, the number of basis functions on $\Gamma_i$ is found as $n_\Gamma^{(i)} = 2(2n_{\omega^{(i)}} + 1)$, where $n_{\omega^{(i)}}$ is the number of frequencies used on the $i^\text{th}$ interface.

\subsubsection{Numerical convergence against the exact solution}
\begin{figure}
\centering
\setlength
\figureheight{0.6\textwidth}
\setlength
\figurewidth{0.7\textwidth}
\input{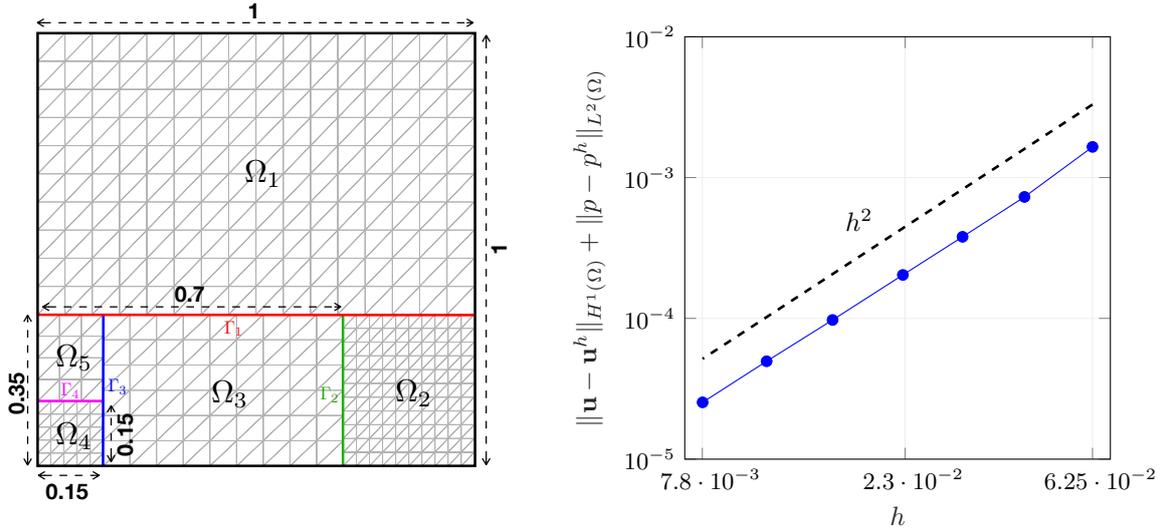}
\caption{On the left, domain decomposition and computational mesh of $\Omega = (0,1) \times (0,1)$, namely the domain considered in the numerical simulation of the Navier-Stokes equations. Each triangulation $\mathcal T^h$ is characterized by the following mesh sizes: $h$ in $\Omega_1$, $h/2$ in $\Omega_2$, $h$ in $\Omega_3$, $h/2$ in $\Omega_4$ and $h$ in $\Omega_5$.  On the right, convergence of the error $\Vert \mathbf u - \mathbf u^h \Vert_{H^1(\Omega)} + \Vert p - p^h \Vert_{L^2(\Omega)}$ against the exact solution \eqref{eq:exactns} with respect to $h$. The number of basis functions at each interface is the same for each evaluation of the error: we set $n_\Gamma^{(1)} = 22$, $n_\Gamma^{(2)} = 18$, $n_\Gamma^{(3)} = 18$ and $n_\Gamma^{(4)} = 14$.}
\label{fig:ns_convergence}
\end{figure}
\label{subsec:nsexact}
We consider Eq.~\eqref{eq:ns} with $\mu = 1$, $\Gamma_N = \{1\} \times (0,1)$, $\Gamma_D = \partial \Omega \setminus \Gamma_N$, and $\mathbf f$, $\mathbf g$, and $\mathbf h$ chosen such that
\begin{equation}
\mathbf u =
\begin{bmatrix}
\sin(y \pi) \\
\exp(x)
\end{bmatrix},
\qquad
p = - \dfrac{1}{2} x^2
\label{eq:exactns}
\end{equation}
is the exact solution.

On the interfaces we set $n_\Gamma^{(1)} = 22$, $n_\Gamma^{(2)} = 18$, $n_\Gamma^{(3)} = 18$ and $n_\Gamma^{(4)} = 14$. The number of basis functions on the interfaces is chosen such that the optimal convergence of finite elements is retrieved. Fig.~\ref{fig:ns_convergence} (right) shows that the following classical error estimate for Taylor-Hood elements
\begin{equation}
\Vert \mathbf u - \mathbf u^{h} \Vert_{H^1(\Omega)} + \Vert p - p^{h} \Vert_{L^2(\Omega)} \leq C h^2
\label{eq:convergence_ns}
\end{equation}
holds. The norms in \eqref{eq:convergence_ns} should be interpreted as broken norms.
\subsubsection{Lid-driven cavity problem}
\label{subsec:lidcavity}
\begin{figure}
\centering
\setlength
\figureheight{0.8\textwidth}
\setlength
\figurewidth{0.8\textwidth}
\input{pictures/streamlines_zoom.tikz}
\caption{Streamlines of the lid-driven cavity problem computed with  meshes belonging to the family $\mathcal T^h$, with $h = 1/16$,  $h = 1/32$ and  $h = 1/128$. The streamlines of a fine reference solution obtained on a uniform mesh with $h = 1/300$ are displayed in black dashed lines. The plots on the top right and bottom row show details over regions of $\Omega$.}
\label{fig:streamlines}
\vspace{1cm}
\begin{tabular}{l l l l l l l}
\toprule
$ $ & \multicolumn{2}{c}{dofs} & & \multicolumn{3}{c}{error} \\
\cmidrule(lr){2-3}
\cmidrule(lr){5-7}
$h$ & velocity & pressure & system size &
 E1 & E2 & E3 \\
 \midrule
1/16 & 3'214 & 445 & 3'751 & 2.9e--3 & 6.5e--4 & 8.3e--4 \\
1/32 & 12'162 & 1'602 & 13'856 & 2.4e--4 & 5.0e--5 & 8.9e--5 \\
1/64 & 46'982 & 6'031 & 53'105 & 1.5e--5 & 4.5e--6 & 8.4e--6 \\
1/128 & 184'178 & 23'334 & 207'604 & 8.3e--7 & 1.6e--6 & 1.3e--6 \\
\bottomrule
\end{tabular}
\captionof{table}{Degrees of freedom (dofs) for the meshes considered in the lid-driven cavity problem and approximation error of the center of the three eddies (E1: primary central eddy, E2: bottom right eddy, E3: bottom left eddy). The system size is the sum of the degrees of freedom of velocity and pressure and the number of degrees of freedom for the Lagrange multipliers, the latter being constant and equal to 92 for all the meshes. The center of the eddy is numerically found as the point where the minimum of the magnitude of the velocity field is reached, and the error is computed as the Euclidean distance of such approximation with the center of the eddy of the fine solution obtained over a uniform mesh with $h = 1/300$, corresponding to 722'402 degrees of freedom for the velocity and 90'601 degrees of freedom for the pressure. The coordinates of the centers of E1, E2 and E3 for the fine solution are: [0.545907,0.593810], [0.879935,0.121555] and [0.059761,0.053354] respectively.}
\label{tab:dofs_error}
\end{figure}
We now focus on the numerical approximation of the classic lid-driven cavity problem \cite{bozeman1973numerical, ghia1982high} with Reynolds number Re = 500. Specifically, we consider Eq.~\eqref{eq:ns} with $\mu = 1$, $\Gamma_D = \partial \Omega$, $\mathbf g = [U,\, 0]^T$ with $U = 500$ on $(0,1) \times \{1\}$ and $U = 0$ on the rest of the boundary, and $\mathbf f = [0,\, 0]^T$. We consider $n_\Gamma^{(1)} = 42$, $n_\Gamma^{(2)} = 18$, $n_\Gamma^{(3)} = 18$ and $n_\Gamma^{(4)} = 14$; as in the numerical simulation presented in Section~\ref{subsec:nsexact}, the number of basis functions on the interfaces is chosen such that the error on the Lagrange multipliers can be considered negligible if compared with the finite element error. We remark that, being the solution of the problem at hand considerably more difficult to capture accurately than the exact solution \eqref{eq:exactns} -- because it features steep gradients and higher Reynolds numbers -- it became necessary to increase the number of basis functions on $\Gamma_1$ in order to obtain optimal convergence. Since we consider only Dirichlet boundary conditions, the problem is not well-posed as the pressure is unique up to a constant. We deal with this issue by fixing the degree of freedom of the pressure in the bottom left corner to zero.

Fig.~\ref{fig:streamlines} shows the streamlines obtained by solving the problem with non-conforming meshes belonging to the family $\mathcal T^h$ characterized by $h = 1/16$, $h = 1/32$ and $h = 1/128$. Furthermore, the streamlines of a fine solution computed with uniform $h = 1/300$ are displayed for reference: these are qualitatively similar to the ones corresponding to Re = 500 reported in e.g. \cite{shi2002laminar}, and we, therefore, assume that the fine solution well approximates the exact solution of the problem. As shown in Fig.~\ref{fig:streamlines} (bottom right), using a smaller mesh size in the regions of the two smaller eddies in the lower part of the domain allowed us to obtain satisfactory approximations of those secondary recirculation zones even with the coarsest mesh size $h = 1/16$. As expected, the differences in the streamlines among the different refinement levels are more evident in $\Omega_1$, were for each mesh belonging to $\mathcal T^h$ the largest elements are located. Specifically, we notice that, while the position of the primary eddy is approximated with good accuracy even for $h = 1/16$, in the peripheral regions of the domain only the streamlines corresponding to $h = 1/128$ are almost indistinguishable from the ones of the exact solution; see Fig.~\ref{fig:streamlines} (top right)
and Fig.~\ref{fig:streamlines} (bottom left). Table~\ref{tab:dofs_error} provides the number of finite element degrees of freedom corresponding to each refinement level and the error in the approximation of the center of each eddy: this quantity is computed as the Euclidean distance of the points where the velocity field attains minimum velocity (in magnitude) in the coarse solutions and in the fine solution. The errors become smaller with $h$. We remark that, for coarse meshes, the approximation error of the two smaller eddies in the lower part of $\Omega$ is one order of magnitude lower than that of the central eddy; this confirms that employing the non-conforming meshes in $\mathcal T^h$, which are characterized by a smaller element size in $\Omega_2$ and $\Omega_4$, leads to a satisfactory approximation of the secondary recirculation regions even for large values of $h$. It is worth noting that the number of degrees of freedom reserved to the discretization of the Lagrange multipliers is constant for all the meshes (as it depends solely on the number of basis functions at each interface) and it is equal to 92: this quantity is much smaller than the number of degrees of freedom for velocity and pressure for each $h$. Hence, the coupling of the finite element spaces is performed by introducing a negligible number of additional variables.

%\begin{table}{h!}
%\centering
%
%\end{table}
%\begin{figure}
%\centering
%\setlength
%\figureheight{0.4\textwidth}
%\setlength
%\figurewidth{0.9\textwidth}
%\input{pictures/streamlines_zoom.tikz}
%\caption{Prova.}
%\label{fig:streamlines}
%\end{figure}

\section{Conclusions}
We presented a non-conforming domain decomposition method for non-overlapping subdomains. At the continuous level, our method and the mortar method are based on the same weak formulations in which the continuity constraints over the primal (the solution) and the dual (the stresses) variables are enforced via Lagrange multipliers. As we described in the paper, our choice of discretizing the space of Lagrange multipliers independently of the spatial discretization in the subdomains offers the advantage of a straight-forward implementation of the method and the possibility of tuning the accuracy of the coupling as required by the application; we limited ourselves to considering Fourier basis functions defined over the interface. However, the saddle-point nature of the problem poses constraints over the richness of the discretized space for the Lagrange multipliers compared to the degrees of freedom of the primal variable: we empirically verified that the inf-sup constant can be controlled dependently on the mesh size, in the sense that finer meshes allow considering larger number of Fourier basis functions, without violating the inf-sup stability. In the numerical experiments, we showed that the optimal convergence of the finite element method was recovered for the Poisson problem; this was confirmed both when using conforming and non-conforming meshes, and when using different polynomial degrees in the subdomains. In the last part of the paper, we showed that the method can be easily extended to the case of non-elliptic equations, such as the Navier-Stokes equations, and to cases of partitions of the domain into multiple subdomains. We were able to recover the optimal convergence rate of finite elements also for the Navier-Stokes equations by considering a number of basis functions on the interfaces considerably lower than the number of degrees of freedom of the discretized subdomains. Moreover, we focused on a possible practical application of the method, i.e. the use of non-conforming structured meshes in the lid-driven cavity problem for capturing the secondary recirculation regions. We showed that considering smaller mesh sizes in correspondence of the secondary eddies ensures satisfactory results in terms of accuracy of the streamlines of the vortex rings.
\label{sec:conclusions}

\section*{Acknowledgments}
The authors are grateful to Prof. Annalisa Buffa and Prof. Alfio Quarteroni for the fruitful discussions and their advice on the topics presented in this paper. The research of the authors is supported by the Swiss National Foundation (SNF), project No. 140184.

\bibliographystyle{model1-num-names}
\bibliography{references}

\end{document}